\renewcommand{\Re}{{\operatorname{Re}\,}}
\renewcommand{\Im}{{\operatorname{Im}\,}}
\newcommand{\lla}{\left\langle}
\newcommand{\rra}{\right\rangle}
\newcommand{\s}{{\mathbf s}}
\renewcommand{\epsilon}{\varepsilon}
\newcommand{\Var}{{\bf Var}}
\newcommand{\sm}{\smallsetminus}
\newcommand{\szego}{Szeg\H{o} }
\newcommand{\inv}{^{-1}}
\newcommand{\kahler}{K\"ahler }
\newcommand{\sqrtn}{\sqrt{N}}
\newcommand{\wt}{\widetilde}
\newcommand{\wh}{\widehat}
\newcommand{\PP}{{\mathbb P}}
\newcommand{\N}{{\mathbb N}}
\newcommand{\R}{{\mathbb R}}
\newcommand{\C}{{\mathbb C}}
\newcommand{\CP}{\C\PP}
\renewcommand{\d}{\partial}
\newcommand{\dbar}{\bar\partial}
\newcommand{\ddbar}{\partial\dbar}
\newcommand{\U}{{\rm U}}
\newcommand{\E}{{\mathbf E}}
\newcommand{\half}{{\textstyle \frac 12}}
\newcommand{\vol}{{\operatorname{Vol}}}
\newcommand{\supp}{{\operatorname{Supp\,}}}
\renewcommand{\phi}{\varphi}
\newcommand{\eqd}{\buildrel {\operatorname{def}}\over =}
\newcommand{\acal}{\mathcal{A}}
\newcommand{\ccal}{\mathcal{C}}
\newcommand{\dcal}{\mathcal{D}}
\newcommand{\ecal}{\mathcal{E}}
\newcommand{\fcal}{\mathcal{F}}
\newcommand{\ical}{\mathcal{I}}
\newcommand{\lcal}{\mathcal{L}}
\newcommand{\ocal}{\mathcal{O}}
\newcommand{\pcal}{\mathcal{P}}
\newcommand{\scal}{\mathcal{S}}
\newcommand{\sss}{{\bf s}}
\newcommand{\al}{\alpha}
\newcommand{\be}{\beta}
\newcommand{\ga}{\gamma}
\newcommand{\La}{\Lambda}
\newcommand{\la}{\lambda}
\newcommand{\ep}{\varepsilon}
\newcommand{\de}{\delta}
\newcommand{\om}{\omega}
\newcommand{\Om}{\Omega}
\newtheorem{theo}{{\sc Theorem}}[section]
\newtheorem{maintheo}{{\sc Theorem}}
\newtheorem{maincor}[maintheo]{{\sc Corollary}}
\newtheorem{lem}[theo]{{\sc Lemma}}
\newtheorem{prop}[theo]{{\sc Proposition}}
\newenvironment{example}{\medskip\noindent{\it Example:\/} }{\medskip}
\newtheorem{defin}[theo]{{\sc Definition}}
\title[Random zeros on complex manifolds]
{Random zeros on complex manifolds: conditional expectations}
\author{Bernard Shiffman}
\author{Steve Zelditch}
\author{Qi Zhong}
\address{Department of Mathematics, Johns Hopkins University, Baltimore, MD
21218, USA} \email{shiffman@math.jhu.edu}
\address{Department of Mathematics, Northwestern University, Evanston, IL,
60208, USA}
\email{zelditch@math.northwestern.edu}
\address{Department of Mathematics, Vanderbilt University, Nashville, TN 37240, USA}
\email{qi.zhong@vanderbilt.edu}
\thanks{Research of the first author partially supported by NSF grant
DMS-0901333; research of the second author partially supported by
NSF grant     DMS-0904252.}
\begin{document}

\date{May 22, 2010}

\begin{abstract} We study the  conditional
 distribution $K^N_k(z | p)$ of zeros  of a Gaussian
system of random polynomials (and more generally, holomorphic sections), given that the polynomials or sections
vanish at a point $p$ (or a fixed finite set of points). The conditional distribution is analogous to the pair correlation function  of zeros but we show that it has quite a different small distance behavior. In particular, the conditional distribution does not exhibit repulsion of zeros in dimension one. To prove this, we
give universal scaling asymptotics for $K^N_k(z | p)$ around $p$.  The key tool is the conditional \szego kernel and its scaling asymptotics. \end{abstract}

\maketitle

\section{Introduction}

In this paper we study the  conditional expected distribution
 of zeros of a Gaussian random  system $\{s_1, \dots,
s_k\}$ of $k \leq m$  polynomials of degree $N$  in $m$ variables,
given that the polynomials $s_j$ vanish at a point $p\in M$, or at
a finite set of points $\{p_1, \dots, p_r\}$. More generally, we
consider systems of holomorphic sections of a degree $N$ positive
line bundle $L^N \to M_m $ over a compact \kahler manifold of
dimension $m$. The conditional expected distribution is the
current $ K_k^N(z |p)\in\dcal'^{k,k}(M)$ given by
\begin{equation}\label{CONDE}\Big( K_k^N(z | p), \phi \Big) : =  \E_N
\Big[(Z_{s_1, \dots, s_k}
,\phi)\Big|s_1(p)=\cdots=s_k(p)=0\Big],\quad
 \mbox{for }\ \phi\in\dcal^{m-k,m-k}(M)
\,. \end{equation}
Here,
$Z_{s_1, \dots, s_k}$ is the $(k, k)$ current of integration over
the simultaneous zeros of the sections; i.e., its pairing with a
smooth test form $\phi \in \dcal^{m - k, m -k}(M)$ is the integral $\int_{Z_{s_1, \dots, s_k}} \phi$
of the test form  over the joint zero set.  The
expectation $\E_N$ is the standard Gaussian conditional expectation  on
$\prod_1^kH^0(M, L^N)$,  which we condition on the linear random
 variable $(s_1,\dots,s_k) \mapsto (s_1(p),\dots,s_k(p))$  that evaluates the sections at the point $p$ (see
Definition \ref{defcondk}).

 We show that $K_k^N(z | p) $  is  a smooth
$(k, k)$ form away from $p$ (Lemma \ref{product}), and we
determine its asymptotics, both unscaled and scaled,  as $N \to
\infty$. Our main result, Theorem \ref{scaled} (for $k=m$) and Theorem \ref{all codim} (for $k<m$), is that the
scaling limit of $K_k^N(z |p)$ around the point $p$ is the
conditional expected distribution $K_{km}^\infty (z | 0)$ of joint zeros given a
zero at $z = 0$ in the Bargmann-Fock ensemble of entire
holomorphic functions on $\C^m$, and we  give an explicit formula
for $K_{km}^\infty (z | 0)$. Thus, the scaling limit is
universal.

Our  study of  $K_k^N(z | p) $ is parallel to our study of the
two-point correlation function $K_{2k}^N(z, p)$ for joint zeros in
our prior work with P. Bleher  \cite{BSZ,BSZ2}. There we showed
that $K_{2k}^N(z, p)$ similarly has a scaling limit given by the
pair correlation function $K^\infty _{2km}(z, 0)$ of zeros in  the
Bargmann-Fock ensemble.  Both $K_{km}^N(z |p) $ and $K_{2 km}^N(z,
p)$  measure a probability density of finding simultaneous zeros
at $z$ and at $p$: $K_{km}^N(z | p) $ is the result of
conditioning in a Gaussian space
 (see e.g. \cite{J}, Chapter 9.3), while $K_{2 km}^N(z, p)$ is a natural
 conditioning from the viewpoint of
random point processes (see \S \ref{PT}).  
Of special interest is the case $k=m$ where the joint zeros are (almost surely) points.
 In this case, the scaling limit (Bargmann-Fock)
conditional density $K_{mm}^\infty (z | 0)$ and  pair correlation
density $K_{2mm}^\infty (z, 0)$  turn out to have quite
different short distance behavior, as discussed in \S \ref{short} below.

To state our results, we need to recall the definition of a
Gaussian random system of holomorphic sections of a line bundle.
We let $(L, h) \to (M, \omega_h)$ be a positive Hermitian  holomorphic line
bundle over a compact complex manifold with \kahler  form $\omega_h=\frac
i2\Theta_h$. We then let $H^0(M, L^N)$ denote
the space of holomorphic sections of the $N$-th tensor power of
$L$. A special case is when $M  = \CP^m$, and $L = \ocal(1)$ (the
hyperplane section line bundle), in which case $H^0(\CP^m,
\ocal(N))$ is the space of  homogenous polynomials of degree $N$. As recalled
in $\S \ref{BACKGROUND}$, the Hermitian metric $h$ on $L$
induces inner products on $H^0(M,L^N)$ and these induce  a
Gaussian measure $\gamma_h^N$ on $H^0(M,L^N)$.  A Gaussian random
system is a choice of $k$ independent Gaussian random sections,
i.e. we endow $\prod_{j = 1}^k H^0(M, L^N)$ with the product
measure. We refer to $(\prod_{j = 1}^k H^0(M, L^N), \prod_{j=1}^k
\gamma_h^N)$ as the {\it Hermitian Gaussian ensemble} induced by
$h$. We let $\E_N=\E_{(\prod\gamma_h^N)}$ denote the expected value with
respect to $\prod
\gamma_h^N$.  Given $s_1, \dots, s_k \in H^0(M, L^N)$ we denote by $Z_{s_1,
\dots, s_k}$ the current of integration over the zero set $\{z \in
M: s_1(z) = \cdots = s_k(z) = 0\}$.  Further background is given
in \S \ref{BACKGROUND} and in \cite{SZ,BSZ,SZa}.

Our first result gives the asymptotics as $N \to \infty$ of the
conditional expectation of the zero current (\ref{CONDE}) of one
section. It shows that conditioning on $s(P) = 0$ only modifies
the unconditional zero current by a term of order $N^{-m}$, where $m=\dim M$.

\begin{maintheo}\label{unscaled} Let $(L, h) \to (M, \omega_h)$ be a positive
Hermitian  holomorphic line bundle over a compact complex manifold of dimension
$m$ with \kahler  form $\omega_h=\frac i2\Theta_h$, and let  $(H^0(M,L^N),
\gamma_{h}^N)$ be
the Hermitian Gaussian ensemble.  Let let $p_1,\dots, p_r$ be
distinct points of $M$. Then for all  test forms $\phi\in
\dcal^{m-1,m-1}(M)$, we have
$$ \E_N\Big[(Z_s,\phi)\Big|s(p_1)=\cdots=s(p_r)=0\Big]=\E_N(Z_s,\phi) -C_m
\,N^{-m}\sum_{j=1}^k\frac {i\ddbar \phi(p_j)}{\Om_M(p_j)}+O(N^{-m-1/2+\ep}),$$
 where $\Om_M= \frac 1{m!}\om_h^m$ is the volume form of $M$, and $C_m=\half
\pi^{m-1}\,\zeta(m+1)$.
\end{maintheo}

As mentioned above, the interesting problem is to rescale the
zeros around a fixed point $z_0$. When $k = m$ the joint zeros of
the system are almost surely a discrete set of points which are
$\frac{1}{\sqrt{N}}$-dense. Hence, we  rescale a
$\frac{C}{\sqrt{N}}$-ball around $z_0$ by $\sqrt{N}$ to make
scaled zeros  a unit apart on average from their nearest
neighbors. If $z_0 \not = p_j$ for any $j$, the scaled limit
density is just  the unconditioned scaled density, so we only
consider the case where $z_0 = p_{j_0}$ for some $j_0$. Then the
other conditioning points $p_j, j \not= j_0$, become irrelevant to
the leading order term, so we only consider the scaled conditional
expectation with  one conditioning point. Our main result is the following
scaling asymptotics

\begin{maintheo}\label{scaled}   Let $(L,h)\to (M,\om_h)$ and $(H^0(M,L^N),
\gamma_{h}^N)$ be as in Theorem \ref{unscaled}, and let $p\in M$. Choose normal
coordinates $z=(z_1,\dots,z_m):M_0,p\to \C^m,0$ on a neighborhood $M_0$ of $p$, and let
$\tau_N=\sqrtn\,z:M_0\to\C^m$ denote the scaled coordinate map.  

Let
$K_m^N(z|p)$ be the conditional expected zero distribution given by \eqref{CONDE} and Definition \ref{defcondk}.  Then for a
smooth test function
$\phi\in\dcal(\C^m)$, we have
\begin{multline*}\left(K^N_m(z|p)\,,\,\phi\circ \tau_N(z)\right)\\=\ \phi(0)\  +\ \int_{\C^m\sm
\{0\}} \phi(u)\left(
\frac i{2\pi}\ddbar\left[\log(1-e^{-|u|^2}) +|u|^2\right]\right)^m
\ +\ O(N^{-1/2+\epsilon})\,,\end{multline*} where
$u=(u_1,\dots,u_m)$ denotes the coordinates in $\C^m$.
\end{maintheo}

In \S \ref{proof2}, we give  a similar result (Theorem \ref{all codim}) for the conditional expected joint zero current $K_k^N(z|p)$ of joint zeros of codimension $k<m$.

Theorem \ref{scaled}  may be reformulated (without the remainder estimate) as
the following weak limit formula for currents:

\begin{maincor} \label{MAINCOR} Under the hypotheses and notation of Theorem
\ref{scaled},
\begin{eqnarray*}\tau_{N*}\left(K^N_m(z|p)\right)\ \to\ K_{mm}^\infty(u|0) & \eqd &  \de_0(u)+ \left(
\frac i{2\pi}\ddbar\left[\log(1-e^{-|u|^2}) +|u|^2\right]\right)^m
\\& =& \de_0(u)+ \frac
{1-(1+|u|^2)e^{-|u|^2}}{(1-e^{-|u|^2})^{m+1}}\left(\frac
i{2\pi}\ddbar|u|^2\right)^m\end{eqnarray*} weakly in
$\dcal'^{m,m}(\C^m)$, as $N\to\infty$.
\end{maincor}

The term $\delta_0(u)$ comes of course from the certainty of finding
a zero at $p$ given the condition. The form $\left(\frac
i{2\pi}\ddbar|u|^2\right)^m$ is the scaling limit of the
unconditioned distribution of zeros.

It follows from the proof that $K_{mm}^\infty(u|0)$ is the conditional density of common zeros
of $m$ independent random  functions in the  Bargmann-Fock ensemble of holomorphic functions on $\C^m$ of the form
  $$f(u) =  \sum_{J \in \N^m}\frac{c_{J}}{\sqrt{J!}}\,
u^J \;,$$ where the coefficients $c_{J}$ are
independent complex Gaussian random variables with mean 0 and
variance 1.
  The monomials $\frac{\pi^{-m/2}}{\sqrt{J!}}\,
u^J$ form a complete orthonormal basis of the Bargmann-Fock space of holomorphic functions that are in $L^2(\C^m, e^{- |z|^2} dz)$,
where $dz$ denotes Lebesgue measure. (We note that $f(u)$ is a.s.\ not in $L^2(\C^m, e^{- |z|^2} dz)$; instead, $f(u)$ is of finite order 2 in the sense of Nevanlinna theory. For further discussion of the Bargmann-Fock ensemble, see \cite {BSZ} and \S6 of the first version (arXiv:math/0608743v1) of \cite{SZa}.)

\subsection{Short distance behavior of the conditional density}
\label{short}

As in the case of the pair correlation function, Corollary
\ref{MAINCOR} determines the short distance behavior of the
conditional density of zeros around the conditioning point.

Before describing the results for the conditional density,  let us
recall the results  in \cite{BSZ,BSZ2}  for the pair correlation
function of zeros. The correlation function $K_{nk}^N(z_1, \dots,
z_n)$ is the probability density of finding  zeros of a system
of $k$ sections  at the $n$ points $z_1, \dots,
z_n$. For purposes of comparison to the conditional density, we
are interested  in the pair correlation density
$K_{2m}^N(z_1,z_2)$ for a full system of $k = m$ sections. It
gives the probability density of finding a pair of zeros of the
system at $(z_1, z_2)$. The scaling limit
\begin{equation}\label{slcd}\kappa_{mm}(|u|):=  \lim
_{N\to\infty}K^N_{1k}(p)^{-2} K_{2m}^N(p, p
+\frac{u}{\sqrtn})\,\end{equation} measures the asymptotic
probability of finding zeros at $p, p + \frac{u}{\sqrt{N}}$. As
the notation indicates, it depends only on the distance $r = |u|$
between the scaled points in the scaled metric around $p$. For
small values of $r$, it is proved in \cite{BSZ,BSZ2} that
\begin{equation} \label{leading} \kappa_{mm}(r)= \frac{m+1}{4}
r^{4-2m} + O(r^{8-2m})\,,\qquad\mbox{as }\ r\to 0\,.\end{equation}
This shows that the pair correlation function exhibits a striking
 dimensional dependence: When $m = 1, \kappa_{mm}(r) \to 0$ as $r \to 0$
and one has ``zero repulsion.'' When $m = 2$, $\kappa_{mm}(r) \to
3/4$ as $r \to 0$ and zeros neither repel nor attract.  With $m
\geq 3$, $\kappa_{mm}(r) \nearrow \infty$ as $r \to 0$ and there
joint zeros tend to cluster, i.e. it is   more likely to find a
zero at a small distance $r$ from another zero than at a small
distance $r$ from a given point.

The probability (density) of finding a pair of scaled zeros at
$(p, p + \frac{u}{\sqrt{N}})$ sounds similar to finding a second
zero at $p + \frac{u}{\sqrt{N}}$ if there is a zero at $p$, i.e.
the conditional probability density. Hence one might expect the
scaled conditional probability to resemble the scaled correlation
function. But Corollary \ref{MAINCOR} tells a different story. We
ignore the term $\delta_0$ (again) since it arises trivially from
the conditioning and only consider the behavior of the coefficient
\begin{equation} \label{leadingb}
\kappa_{m}^{\operatorname{cond}}(|u|) : = \frac {1-(1+|
u|^2)e^{-|u|^2}}{(1-e^{-|u|
^2})^{m+1}} \sim \frac{1}{2}\;|u|^{2 - 2 m} \end{equation} of the
scaling limit conditional distribution with respect to the Lebesgue
density $\left(\frac i{2\pi}\ddbar|u|^2\right)^m$ near $u = 0$.
The shift of the exponent down by $2$ in comparison to equation
(\ref{leading}) has the effect of shifting the dimensional
description down by one: In dimension one, the coefficient is
asymptotic to $\frac{1}{2}$  and therefore resembles the neutral
situation in our description of the pair correlation function.
Thus we do not see `repulsion' in the one dimensional conditional
density.  In dimension two, the conditional density
(\ref{leadingb}) is asymptotic to $\frac{1}{2} |u|^{-2}$, and
there is  a singularly enhanced probability of finding a zero near
$p$ similar to that for the pair correlation function in dimension
three; and so on in higher dimensions.

The following graphs illustrate the different behavior of these two  conditional zero distrbutions in low dimensions:
\begin{center}  \includegraphics[height=2in]{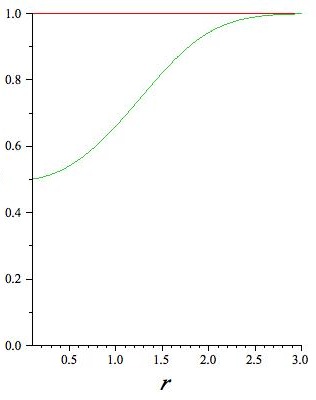}\hspace{1in}\includegraphics[height=2in]{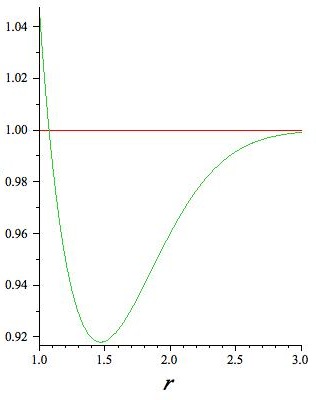}\\[-5pt]{\small$\kappa_{1}^{\operatorname{cond}}(r)
$\hspace{2in} $\kappa_{2}^{\operatorname{cond}}(r)$}
\\[16pt] \includegraphics[height=2in]{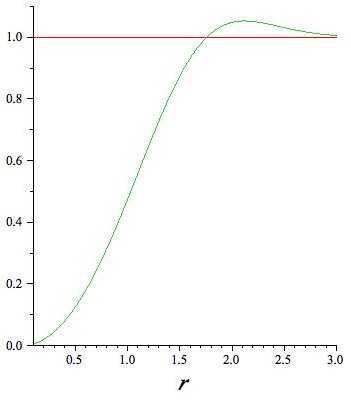}\quad 
\includegraphics[height=2in]{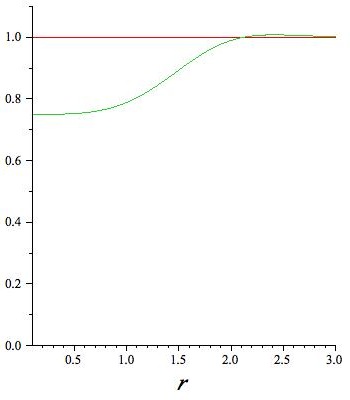}\quad \includegraphics[height=2in]{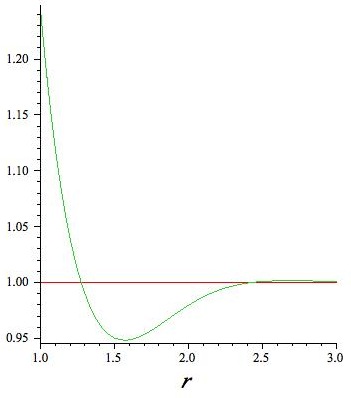}\\[-5pt]\hspace{.3in} {\small$\kappa_{11}^\infty(r)$\hspace{1.2in} $\kappa_{22}^\infty(r)$
\hspace{1.5in}$\kappa_{33}^\infty(r)$}\end{center}

 It is well known that conditioning on an
event of probability zero depends on the random variable used to
define the event. So there is no paradox, but possibly some
surprise, in the fact that the two conditional distributions are
so different.
See \S \ref{comparison} for further discussion of the comparison of the pair
correlation and the conditional density.

\section{\label{BACKGROUND} Background}
We begin with some notation and basic properties of sections of
holomorphic line bundles, Gaussian measures. The notation is the same as in
\cite{BSZ,SZ2,SZa}.

\subsection{Complex Geometry} We denote by $(L,h)\to M$ a Hermitian
holomorphic line bundle over a compact \kahler manifold $M$ of dimension $m$,
where $h$ is a smooth Hermitian metric with positive
curvature form
\begin{equation}
\Theta_h=-\partial\bar\partial\log \|e_L\|^2_h\,.
\end{equation}
 Here, $e_L$ is a local non-vanishing
holomorphic section of $L$ over an open set $U\subset M$, and
$\|e_L\|_h=h(e_L,e_L)^{1/2}$ is the $h$-norm of $e_L$. As in
\cite{SZa}, we give $M$ the Hermitian metric corresponding to the
K\"ahler form $\om_h=\frac{\sqrt{-1}}{2}\Theta_h$ and the induced
Riemannian volume form
\begin{equation}
\Om_M=\frac 1{m!}\,\om_h^m.
\end{equation}

  We denote by
$H^0(M,L^N)$ the space of holomorphic sections of
$L^N=L^{\otimes N}$. The metric $h$ induces Hermitian
metrics $h^N$ on $L^N$ given by $\|s^{\otimes N}\|_{h^N}=\|s\|^N_h.$
We give $H^0(M, L^N)$ the Hermitian inner product
\begin{equation}\label{inner product}
\langle s_1,s_2\rangle=\int_Mh^N(s_1,s_2)\,\Om_M  \ \ \ \ (s_1,s_2\in
H^0(M,L^N)),
\end{equation}
and we write $\|s\|=\langle s,s\rangle^{1/2}.$

 For a holomorphic section $s\in H^0(M, L^N)$, we let $Z_s\in\dcal'^{1,1}(M)$
denote the
current of integration over the zero divisor of $s$:
$$(Z_s, \phi)=\int_{Z_s}\phi, \ \ \ \phi\in \dcal^{m-1,m-1}(M),$$
where $\dcal^{m-1,m-1}(M)$ denotes the set of compactly supported $(m-1,m-1)$
forms on $M$.  (If $M$ has dimension 1, then $\phi$ is a compactly supported
smooth function.)
For $s=ge_L$ on an open set $U\subset M$, the
Poincar\'{e}-Lelong formula  states that
\begin{equation}\label{PL}
Z_s=\frac i\pi\partial\bar\partial\log
|g|=\frac i\pi \partial\bar\partial\log||s||_{h^N} +\frac N\pi\om_h.
\end{equation}

\subsubsection{\label{SZEGO} The Szeg\"o kernel} Let
$\Pi_N$: $L^2(M,L^N)\to H^0(M,L^N)$ denote the \szego projector with kernel $
\Pi_N$ given by
\begin{equation}
\Pi_N(z,w)=\sum^{d_N}_{j=1}S^N_j(z)\otimes \overline{S^N_j(w)}\in  L^N_z\otimes
\overline L^N_w\,,
\end{equation}
where
$\{S^N_j\}_{1\le j\le d_N}$ is an orthonomal basis of $H^0(M,L^N)$.

We shall use the  {\it normalized \szego
kernel}
\begin{equation}\label{PN} P_N(z,w):=
\frac{\|\Pi_N(z,w)\|_{h^N}}{\|\Pi_N(z,z)\|_{h^N}^{1/2}\,
\|\Pi_N(w,w)\|_{h^N}^{1/2}}\;.\end{equation}
(Note that $\|\Pi_N(z,w)\|_{h^N} = \sum\|S^N_j(z)\|_{h^N(z)}\,\|S^N_j(w)\|
_{h^N(w)}$, which equals the absolute value of the \szego kernel lifted to the
associated circle bundle,  as described in \cite{SZ2,SZa}.)

We have the $\ccal^\infty$ diagonal asymptotics  for the \szego kernel
(\cite{C,Z}):
\begin{equation}\label{Zel} \|\Pi_N(z,z)\|_{h^N} = \frac {N^m}{\pi^m} +
O(N^{m-1})\;.\end{equation}
Off-diagonal estimates for the
normalized \szego kernel $P_N$ were given in \cite{SZa}, using the off-diagonal
asymptotics for $\Pi_N$ from
\cite{BSZ,SZ2}. These estimates are of two types:\\[5pt]
1) `far-off-diagonal' asymptotics (Proposition 2.6 in \cite{SZa}):
For
$b>\sqrt{j+2k}$, $j,k\ge 0$, we have
\begin{equation}\label{far} \nabla^j
P_N(z,w)=O(N^{-k})\qquad \mbox{uniformly for }\ d(z,w)\ge
b\,\sqrt{\frac {\log N}{N}} \;.\end{equation}
(Here, $\nabla^j$ stands for the $j$-th covariant derivative.)\\[5pt]
2) `near-diagonal' asymptotics (Propositions 2.7--2.8 in \cite{SZa}):
Let $ z_0\in M$. For $\ep,b>0$,
there are constants $C_j=C_j({M,\ep,b})$, $j \ge 2$, independent
of the point $z_0$, such that
\begin{equation}\label{near}\textstyle  P_N\left(z_0+\frac u{\sqrtn},z_0 +\frac
v{\sqrtn}\right) =
e^{-\frac 12 |u-v|^2}[1 + R_N(u,v)]\;,\end{equation} where
\begin{equation}\label{nearr} \begin{array}{c}|R_N(u,v)|\le \frac {C_2}2\,|u-v|
^2N^{-1/2+\ep}\,, \quad
|\nabla R_N(u)| \le C_2\,|u-v|\,N^{-1/2+\ep}\,,
\\[8pt] |\nabla^jR_N(u,v)|\le C_j\,N^{-1/2+\ep}\quad j\ge 2\,,\end{array}
\end{equation}
for $|u|+|v|<b\sqrt{\log N}$.  (Here, $u, v$ are normal coordinates near
$z_0$.)

The limit on the right side of (\ref{near}) is the normalized
\szego kernel for the Bargmann-Fock ensemble (see \cite{BSZ}).
This is why the scaling limits of the correlation functions and
conditional densities coincide with those of the Bargmann-Fock
ensemble.

\subsection{Probability}  If $V$ is a finite dimensional complex vector space,  we shall associate a complex Gaussian probability measure $\ga$ to each Hermitian inner product on $V$ as follows:  Choose an orthonormal basis  $v_1,\dots,v_n$ for the inner product and  define $\ga$ by
\begin{equation}
d\gamma(v)=\frac{1}{\pi^{n}}e^{-|a|^2}d_{2n}a, \ \
s=\sum^{n}_{j=1}a_jv_j\in V\,,
\end{equation} where  $d_{2n}a$ denotes $2n$-dimensional Lebesgue measure.
This Gaussian
is characterized by the property that the $2n$ real variables
$\Re a_j$, $\Im a_j$ ($j=0,....,d_N$) are independent random
variables with mean 0 and variance $\frac12$; i.e.,
$$\mathbf{E}_\ga a_j=0, \ \ \mathbf{E}_\ga a_ja_k=0, \ \ \mathbf{E}_\ga a_j\bar a_k=
\delta_{jk}\,.$$ 
Here and throughout this article, $\mathbf{E}_\ga$ denotes expectation with respect to the probability measure $\ga$:
$\mathbf{E}_\ga\phi=\int\phi \,d\gamma$.
Clearly, $\ga$ does not depend on the choice of orthonormal basis, and each (nondegenerate) complex Gaussian measure on $V$ is associated with a unique (positive definite) Hermitian inner product on $V$.

In particular, we give $H^0(M,L^N)$ the complex Gaussian probability measure $\ga_h$
induced by the inner product \eqref{inner product}; i.e., 
\begin{equation}
d\gamma_h(s)=\frac{1}{\pi^{d_N+1}}e^{-|a|^2}da, \ \
s=\sum^{d_N}_{j=1}a_jS^N_j,
\end{equation}
where $\{S^N_j:1\leq j\leq d_N\}$ is an orthonormal basis for
$H^0(M,L^N)$ with respect to (\ref{inner product}).
The probability space $(H^0(M,L^N),\ga_N)$ is called the {\it Hermitian Gaussian ensemble\/}.
We  regard the currents $Z_s$ (resp.\ measures $|Z_s|$), as
current-valued (resp.\ measure-valued) random variables on $(H^0(M,L^N),\ga_N)$; i.e., for each test form
(resp.\ function) $\phi$, ($Z_s,\phi$) (resp. ($|Z_s|,\phi$)) is a
complex-valued random variable.

Since the zero current $Z_s$ is unchanged when $s$ is multiplied
by an element of $\C^*$, our results remain the same if we instead
regard $Z_s$ as a random variable on the unit sphere $SH^0(M,L^N)$
with Haar probability measure. We prefer to use Gaussian measures
in order to facilitate computations.

\subsubsection{Holomorphic Gaussian random fields} Gaussian random fields are
determined by their two-point
functions or covariance functions.  We are mainly interested in the
case where the fields are holomorphic sections of $L^N$; i.e, our probability
space is a subspace
$\scal$  of the space $ H^0(M, L^N)$ of holomorphic sections of $L^N$ and the
probability measure on $\scal$ is the Gaussian measure induced by the inner
product \eqref{inner product}.  If we pick an
orthonormal basis $\{S_j\}_{1\le j\le m}$ of $\scal$ with respect to
(\ref{inner product}), then we may write $s
=\sum^{n}_{j=0}a_jS_j $, where the coordinates $a_j$ are i.i.d.\ complex
Gaussian random variables. The two point function
\begin{equation}
\Pi_\scal(z,w):=\E_\scal\left(s(z)\otimes\overline{s(w)}\right)=
\sum^{n}_{j=1}S_j(z)\otimes \overline{S_j(w)}\end{equation}
is the kernel of the  orthogonal projection onto $\scal$, and equals the \szego
kernel $\Pi_N(z,w)$ when $\scal = H^0(M, L^N)$.  The expected zero current $\E_
\scal\big(Z_s\big)$ for random sections $s\in\scal$ is given by the {\it
probabilistic Poincar\'e-Lelong formula:}

\begin{lem}\label{important lemma} {\rm \cite{SZ}} Let $(L,h)\to M$ be a
Hermitian holomorphic line bundle over a compact complex manifold $M$ and let $
\scal\subset H^0(M,L^N)$ be a Gaussian random field with two-point function $
\Pi_\scal(z,w)$. Then
$$\E_\scal\big(Z_s\big) = \frac i{2\pi} \ddbar\log\|\Pi_\scal(z,z)\|_{h^N} +
\frac N{2\pi}\,\sqrt{-1}\,\Theta_h\,.$$
\end{lem}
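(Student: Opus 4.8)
The plan is to push the Gaussian expectation through the Poincar\'e--Lelong formula (\ref{PL}), thereby reducing the computation of the expected zero current to the computation of an expected logarithmic potential. Writing $Z_s=\frac i\pi\ddbar\log\|s\|_{h^N}+\frac N{2\pi}\sqrt{-1}\,\Theta_h$ by (\ref{PL}) (using $\om_h=\frac{\sqrt{-1}}2\Theta_h$), the deterministic curvature term is unaffected by $\E_\scal$, so it remains to show
$$\E_\scal\Big[\tfrac i\pi\ddbar\log\|s\|_{h^N}\Big]=\tfrac i{2\pi}\ddbar\log\|\Pi_\scal(z,z)\|_{h^N}\,,$$
which, granting that $\E_\scal$ commutes with $\ddbar$, amounts to evaluating the scalar potential $\E_\scal[\log\|s\|_{h^N}]$ up to an additive constant.

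For the pointwise evaluation of the potential I would fix $z$ and choose a local holomorphic frame, writing $s=g\,e_L^{\otimes N}$ with $g=\sum_j a_jg_j$, where $S_j=g_je_L^{\otimes N}$. Since the coordinates $a_j$ are i.i.d.\ standard complex Gaussians (so that $\E a_j\bar a_k=\delta_{jk}$), the evaluation $g(z)$ is a single complex Gaussian of variance $\sum_j|g_j(z)|^2=:L(z)^2$; factoring $g(z)=L(z)\,\xi$ with $\xi$ a standard complex Gaussian gives $\E_\scal[\log|g(z)|]=\log L(z)+c$, where $c=\E[\log|\xi|]$ is a universal constant. Because $\|\Pi_\scal(z,z)\|_{h^N}=\sum_j\|S_j(z)\|_{h^N}^2=L(z)^2\,\|e_L^{\otimes N}\|_{h^N}^2$, adding back the frame contribution $\log\|e_L^{\otimes N}\|_{h^N}$ yields $\E_\scal[\log\|s\|_{h^N}]=\half\log\|\Pi_\scal(z,z)\|_{h^N}+c$. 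Applying $\frac i\pi\ddbar$ and using $\ddbar c=0$ then produces exactly the claimed expression.

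The step requiring real care---and the main obstacle---is the interchange of the expectation $\E_\scal$ with the distributional operator $\ddbar$. I would make this rigorous by pairing $Z_s$ with an arbitrary test form $\phi\in\dcal^{m-1,m-1}(M)$, transferring the derivatives onto $\phi$ so that $(\frac i\pi\ddbar\log\|s\|_{h^N},\phi)=\frac i\pi\int_M\log\|s\|_{h^N}\,\ddbar\phi$, and then applying Fubini's theorem to exchange the $z$-integral with the Gaussian integral. This is legitimate once one verifies the joint local integrability of $(z,s)\mapsto\log\|s(z)\|_{h^N}$ with respect to $\Om_M\times d\gamma$, which follows from the integrability $\E|\log|\xi||<\infty$ for a standard complex Gaussian $\xi$, together with the local integrability of $\log\|\Pi_\scal(z,z)\|_{h^N}$ (its $-\infty$ set being the proper analytic subvariety where all the $S_j$ vanish). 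These bounds confirm that the singularities of the potential are too mild to obstruct the exchange, completing the argument.
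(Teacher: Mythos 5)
Your proposal is correct and takes essentially the same route as the paper's own proof: your factorization $g(z)=L(z)\,\xi$ with $c=\E[\log|\xi|]$ a universal constant is exactly the paper's decomposition $\log|\langle a,F\rangle|=\log|F|+\log|\langle a,U\rangle|$, where $\E(\log|\langle a,U\rangle|)$ is constant in $z$ by $\U(n)$-invariance of the Gaussian. Your justification of the interchange of $\E_\scal$ with $\ddbar$ (pairing with a test form and applying Fubini, using $\E\big|\log|\xi|\big|<\infty$ and local integrability of the deterministic potential) is likewise the same argument the paper gives.
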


This lemma was given in \cite[Prop.~3.1]{SZ} and \cite[Prop.~2.1]{SZa} with
slightly different hypotheses.  For convenience, we include a proof below.

\begin{proof}
Let $\{S_j\}_{1\le j\le n}$ be a basis of $\scal$ such that  $s\in\scal$ is of
the form $s=\sum_{j=1}^n a_j S_j$, where the $a_j$ are independent standard
complex Gaussian random variables, as above.  We then have $\|\Pi_\scal(z,z)\|
_{h^N}= \sum_{j=1}^n\|S_j\|^2_{h^N}$. For any
$s\in \scal$, we write $$s=\sum_{j=1}^n a_jS_j=\langle a,F\rangle e_L^{\otimes
N},$$ where $e_L$ is a local
non-vanishing holomorphic section of $L$,  $S_j=f_j\,e_L^{\otimes N}$,
 and $F=(f_1,...,f_n).$ We then
write $F(z)=|F(z)|U(z)$ so that $|U(z)|\equiv 1$ and
$$\log|\langle a,F\rangle |=\log|F|+\log|\langle a,U\rangle|.$$
A key point is that $\E(\log|\langle a,U\rangle|)$ is independent of $z$, and
hence $\E(d\log|\langle a,U\rangle|)=0$.  We note that $U$ is well-defined a.e.
on $ M\times \scal$; namely, it is defined whenever $s(z)\neq 0$.

Write $d\ga= \frac 1{\pi^n}e^{-|a|^2}\,da$.
By \eqref{PL}, we have
\begin{align*}
(\E Z_s,\phi)&= \E\left(\frac{\sqrt{-1}}{\pi} \ddbar\log|\langle a,F\rangle |,
\phi\right)\
                  =\ \frac{\sqrt{-1}}{\pi}\int_{\C^{n}}(\log|\langle a,F\rangle
|,\partial\bar\partial
                 \phi)\,d\gamma\\
                 &=\frac{\sqrt{-1}}{\pi}\int_{\C^{d_N}}(\log|F|,\partial\bar
\partial
                 \phi)\,d\gamma+\frac{\sqrt{-1}}{N}\int_{\C^n}(\log|\langle a,U
\rangle|,\partial\bar\partial\phi)\,d\gamma,
\end{align*}
for all test forms $\phi\in \dcal^{m-1,m-1}(M)$. The first term is
independent of $a$, so we may remove the Gaussian integral. The
vanishing of the second term follows by noting that
\begin{align*}
\int_{\C^{n}}(\log|\langle a,U\rangle|,\partial\bar\partial\phi)\,d\gamma&=
\int_{\C^{n}}\,d\gamma\int_M\log|\langle a,U\rangle|\,\partial\bar\partial\phi\
\
&=\int_M\int_{\C^{n}}\log|\langle a,U\rangle|\,d\gamma\,\partial\bar\partial
\phi=0,
\end{align*}
since
$\int\log|\langle a,U\rangle|\,d\gamma=\frac{1}{\pi}\int_{\C}\log|a_0|e^{-|a_0|
^2}\,da_0$
is constant, by the $\U(n)$-invariance of $d\gamma$.
Fubini's Theorem can be applied above since
$$\int_{M\times\C^{n}}\big|\log|\langle a,U\rangle|\,\partial\bar\partial\phi
\big|\,d\gamma=\frac{1}{\pi}\int_{\C}\big|\log|a_0|\,\big|e^{-|a_0|^2}\,da_0\,
\int_M
|\partial\bar\partial\phi|<+\infty.$$
Thus \begin{eqnarray*}
\E Z_s &=&\frac{\sqrt{-1}}{2\pi}\partial\bar\partial\log|F|^2\ =\
\frac{\sqrt{-1}}{2\pi}\partial\bar\partial\left(\log\sum^{n}_{j=1}\|S_j\|^2_h
-\log\|e_L\|^2_h\right)\\&=&  \frac {\sqrt{-1}}{2\pi} \ddbar\log\|\Pi_
\scal(z,z)\|_h +\frac {\sqrt{-1}}{2\pi}\Theta_h\,.
\end{eqnarray*}
 \end{proof}

\section{Conditioning on the values of a random variable}\label{conditioning}

In this section, we give a precise definition of the conditional
expected zero current
$\E\big(Z_{s_1,\dots,s_k}\big|s_1(p)=v_1,\,\dots,\,s_k(p)=v_k\big)$
(Definition \ref{defcondk}) and give a number of its properties.
In particular, we give a formula for
$\E\big(Z_s\big|s(p_1)=\cdots=s(p_r)=0\big)$ in terms of the
conditional \szego kernel (Lemma \ref{linear}).

\subsection{The Leray form}\label{LERAY}
 We first give a general formula for the
conditional expectation $\E(X|Y=y)$ of a continuous random
variable $X$ with respect to a smooth random variable $Y$ when $y$
is a regular value of $Y$. Our discussion differs from the
standard expositions, which do not tend to assume random variables
to be smooth.

We begin by recalling the definition of the  conditional
expectations $\E(X|\fcal)$ of a random variable $X$ on a
probability space $(\Om,\acal,P)$ given a sub-$\sigma$-algebra
$\fcal\subset\acal$:

\begin{defin} Let $X$ be a random
variable $X$ with finite first moment (i.e., $X\in L^1$) on a probability space
$(\Om,\acal,P)$, and let $\fcal\subset\acal$ be a $\sigma$-algebra. The
conditional expectation  is a random variable $E(X|\fcal)\in L^1(\Om,P)$
satisfying:
\begin{itemize}
\item $\E(X|\fcal)$ is measurable with respect to $\fcal$;
\item For all sets $A\in\fcal$, $\ \int_A \E(X|\fcal)\,dP =\int_A X\,dP$.

\end{itemize}
\end{defin}
The existence and uniqueness (in $L^1$)  of $E(X|\fcal)$ is a standard fact
(e.g.,  \cite[Th.~6.1]{K}).

In this paper, we are interested in the conditional expectation
$\E(X|\sigma(Y ))$ of a continuous random variable $X$ on a
manifold $\Om$ with respect to a smooth  random variable
$Y:\Om\to\R^k$. Here, $\sigma(Y )$ denotes the $\sigma$-algebra
generated by $Y$, i.e.\ the pull-backs by $Y$ of the Borel sets in
$\R^k$;
 $\sigma(Y )$ is  generated by the sublevel sets $\{Y_j\leq t_j , j
= 1, \dots , k\}$. The condition that  $\E(X|\sigma(Y ))$ is measurable with
respect to $\sigma(Y)$ implies that it is constant on the level sets of $Y$.
We then write
$$ \E(X|Y=y)\ :=\ \E(X|\sigma(Y ))(x)\,,\quad x\in Y\inv(y)\,.$$
We call $ \E(X|Y=y)$ the {\it conditional expectation of $X$ given that $Y=y$.}
We note that the function $y\mapsto \E(X|Y=y)$ is in $L^1(\R^k,Y_*P)$, and is
not necessarily well-defined at each point $y$.  However, in the cases of
interest to us, $ \E(X|Y=y)$ will be a continuous function.

To give a geometrical description of $\E(X|Y )$, we use the language of
Gelfand-Leray forms:

\begin{defin} Let $Y : \Om \to \R^k$ be a $\ccal^\infty$ submersion where $\Om$
is an
oriented $n$-dimensional manifold. Let $\nu\in \ecal^n$, e.g. a
volume form. The Gelfand-Leray form $\lcal(\nu,Y,y ) \in \ecal^{n-k}(Y^{-1}(y))
$ on the
level set $\{Y = y\}$ is given by
\begin{equation}
\lcal(\nu,Y,y)\wedge dY_1 \wedge\cdots\wedge dY_k = \nu \ \ \mbox{on }\ {Y^{-1}
(y)}\,,\ \ \ i.e.,\  \lcal(\nu,Y,y) = \frac{\nu}{dY_1\wedge\cdots\wedge dY_k
}\bigg|_{Y\inv(y)} .
\end{equation}
\end{defin}

Conditional expectation of a random variable is a form of
averaging. The following Proposition shows this explicitly: it
amounts to averaging $X$ over the level sets of $Y$.

\begin{prop} \label{Leray} Let $\nu\in\ecal^n(\Om)$ be a smooth probability
measure on a manifold $\Om$. Let $Y : \Om \to \R^k$ be a $\ccal^\infty$
submersion, and let $X\in L^1(\Om,\nu)$. Then
$$\E(X|Y=y) = \frac{\int_{Y =y} X \,\lcal(\nu,Y,y ) } {\int_{Y =y} \lcal(\nu,
Y,y)}.$$
\end{prop}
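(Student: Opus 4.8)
The plan is to verify directly that the right-hand side, viewed as a function of $y$ and pulled back by $Y$, satisfies the two defining properties of $\E(X|\sigma(Y))$. Set $g(y) := \frac{\int_{Y=y} X\,\lcal(\nu,Y,y)}{\int_{Y=y}\lcal(\nu,Y,y)}$. Since $g\circ Y$ is by construction constant on the level sets of $Y$, it is automatically $\sigma(Y)$-measurable, so the only thing left to check is that $\int_{Y\inv(B)} (g\circ Y)\,\nu = \int_{Y\inv(B)} X\,\nu$ for every Borel set $B\subset\R^k$ (recall that every element of $\sigma(Y)$ is of the form $Y\inv(B)$).

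The essential analytic tool is the fibering (coarea) formula for the Gelfand--Leray form: for any $\nu$-integrable function $f$ on $\Om$,
$$\int_\Om f\,\nu = \int_{\R^k}\left(\int_{Y\inv(y)} f\,\lcal(\nu,Y,y)\right) dy_1\cdots dy_k\,.$$
This is just Fubini's theorem in adapted coordinates: because $Y$ is a submersion, one can locally complete $Y_1,\dots,Y_k$ to a coordinate system, and there the defining relation $\lcal\wedge dY_1\wedge\cdots\wedge dY_k=\nu$ expresses $\nu$ as a fiber form wedged with $dy_1\wedge\cdots\wedge dy_k$; integrating first over the fiber and then over $y$ reproduces the left-hand side. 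A partition of unity patches the local identities together.

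Applying the fibering formula with $f\equiv 1$ shows that the pushforward $Y_*\nu$ has Lebesgue density $\rho(y):=\int_{Y\inv(y)}\lcal(\nu,Y,y)$; since $\nu$ is a smooth positive probability density and $Y$ is a submersion, $\rho(y)>0$ on the image of $Y$, so $g$ is well-defined there. I then apply the fibering formula twice for a fixed Borel set $B$ --- once to $X$ and once to $(g\circ Y)\,\mathbf{1}_{Y\inv(B)}$, using that $g\circ Y\equiv g(y)$ on the fiber $Y\inv(y)$:
$$\int_{Y\inv(B)}(g\circ Y)\,\nu = \int_B g(y)\,\rho(y)\,dy = \int_B\left(\int_{Y\inv(y)}X\,\lcal\right)dy = \int_{Y\inv(B)}X\,\nu\,,$$
where the middle equality is exactly the definition of $g$. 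This establishes both characterizing properties and hence the formula.

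The main obstacle is not the algebra above but the careful justification of the fibering formula together with the attendant bookkeeping: fixing the orientation of the fibers induced by $\nu$ and $dY_1\wedge\cdots\wedge dY_k$ so that $\lcal(\nu,Y,y)$ is a genuine positive measure on $Y\inv(y)$, and verifying the integrability needed to invoke Fubini (here $X\in L^1(\Om,\nu)$ guarantees $\int_{Y\inv(y)}|X|\,\lcal<\infty$ for almost every $y$, so both sides are finite a.e.). Once the coarea identity is in place, the abstract characterization of $\E(X|\sigma(Y))$ closes the argument at once.
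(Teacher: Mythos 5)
Your proposal is correct and follows essentially the same route as the paper: both verify the two defining properties of $\E(X|\sigma(Y))$ by fibering $\nu$ over the level sets of $Y$ via the Gelfand--Leray form (change of variables plus Fubini) and then invoking the uniqueness of conditional expectation, the paper checking the averaging identity on sets $Y^{-1}(R)$ with $R$ a rectangle where you check it directly on all Borel sets. The one small overreach is your claim that $\rho(y)>0$ on the image of $Y$ because ``$\nu$ is a smooth positive probability density''---positivity of $\nu$ is not assumed in the statement; as in the paper, one only gets $\rho(y)>0$ for $Y_*\nu$-almost every $y$, which suffices since $\E(X|Y=y)$ need only be defined $Y_*\nu$-almost everywhere.
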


\begin{proof}We first note that
$$ \int_{y\in \R^k}\left(\int_{Y^{-1}(y)}|X|\,\lcal(\nu,Y,y )\right)dy_1\cdots
dy_k=\int_X|X|\,\nu=1\,,$$
and hence $\int_{Y^{-1}(y)}|X|\,\lcal(\nu,Y,y )<+\infty$ for almost all $y\in
\R^k$. Furthermore  $\int_{Y^{-1}(y)}\lcal(\nu,Y,y )>0$ for $Y_*\nu$-almost all
$y\in\R^k$, and therefore $\E(X|Y=y)$ is well defined for $Y_*\nu$-almost all
$y$.
 Now let $$\wt E(x) =  \frac{\int_{Y =Y(x)} X \,\lcal(\nu,Y,Y(x) ) } {\int_{Y
=Y(x)} \lcal(\nu, Y,Y(x))}\,, \quad \mbox{for\ } \nu\mbox{-almost all\ }\  x\in
X.$$ The function $\wt E$ is measurable with respect to
$\sigma(Y )$ since it is the pull-back by $Y$ of a measurable function on $\R^k
$.

The only other thing to check is that $\int_A \wt E\,\nu =
\int_A X\,\nu$ for all $A\in\fcal$. It suffices to check this for
sets $A$ of the form $Y^{-1}(R)$ where $R$ is a rectangle in $\R^k$. But
then by the change of variables formula and Fubini's theorem,
\begin{multline*}\int_{Y^{-1}(R)}\wt E\,\nu = \int_{y\in R}\left(\int_{Y^{-1}
(y)}\wt E\,\lcal(\nu,Y,y )\right)dy_1\cdots dy_k\\= \int_{y\in R}
\left(\int_{Y^{-1}(y)}X\,\lcal(\nu,Y,y )\right)dy_1\cdots dy_k = \int_{Y^{-1}
(R)}
 X\,d\nu.\end{multline*} By uniqueness of the conditional expectation, we then
conclude that $\wt E = \E(X|\sigma(Y ))$.
\end{proof}

\begin{example} Let $\Om=\C^n$ with Gaussian probability measure $d\ga_n=\pi^{-
n}e^{-|a|^2}\,da$.
 Let $\pi_k:\C^n\to\C^k$ be the projection $
\pi_k(a_1,\dots,a_n)=(a_1,\dots,a_k)$.  For $y\in\C^k$ we have
$$\lcal(d\ga_n,\pi_k,y)=\frac 1{\pi^k}\,e^{-(|y_1|^2+\cdots+|y_k|^2)}\,d\ga_{n-
k}(a_{k+1},\dots,a_n)\,,$$
where  $$d\ga_{n-k}(a_{k+1},\dots,a_n)=e^{-(|a_{k+1}|^2+\cdots + |a_n|^2)}\,
\left(\frac i{2\pi}\right)^{n-k} da_{k+1}\wedge d \bar a_{k+1}\wedge\cdots
\wedge  da_n\wedge d \bar a_n$$
is the standard complex Gaussian measure on $\C^{n-k}$. For a  bounded random
variable $X$ on $\C^n$, let $X_y$ be
 the random variable on $\C^{n-k}$ given by $X_y(a')=X(y,a')$ for $a'\in\C^{n-
k}$. By Proposition \ref{Leray}, we then have
\begin{equation}\label{condgauss} \E_{\ga_n}(X|\pi_k=y)\ =\ \E_{\ga_{n-k}}(X_y)
\,.\end{equation}
\end{example}

This example leads us to the following definition:

\begin{defin}\label{defcg}  Let $\ga$ be a complex Gaussian measure on a finite dimensional complex space $V$, and let $W$ be a subspace of $V$. We define the {\em conditional Gaussian measure} $\ga_W$ on $W$ to be the Gaussian measure associated with the Hermitian inner product on $W$ induced by the inner product on $V$ associated with $\ga$. \end{defin}

The terminology of Definition \ref{defcg} is justified by the following proposition, which we shall use to define the expected zero current
conditioned on the value of a random holomorphic section at a point or points:

\begin{prop} \label{special}Let  $T:\C^n\to V$ be a linear map onto a complex
vector space $V$. Let $E$ be a closed subset of $\C^n$ such that $E\cap T
\inv(y)$ has  Lebesgue measure $0$ in $T\inv(y)$ for all $y\in V$. Let $X$ be a
bounded random variable on $\C^n$ such that $X|(\C^n\sm E)$ is continuous. Then
$\E_{\ga_n}(X|T=y)$ is continuous on $\C^k$.  Furthermore
$$\E_{\ga_n}(X|T=0)\ =\ \E_{\ga_{\ker T}}(X')\,,$$ where $X'$ is the restriction of $X$ to $\ker T$ and $\ga_{\ker T}$ is the conditional Gaussian measure on $\ker T$ as defined above.
\end{prop}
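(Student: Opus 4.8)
The plan is to reduce the general surjection $T$ to the coordinate projection $\pi_k$ treated in the Example preceding Definition \ref{defcg}, exploiting the invariance of $\ga_n$ under orthonormal changes of basis. First I would decompose $\C^n=(\ker T)^\perp\oplus\ker T$ orthogonally for the standard Hermitian inner product and choose an orthonormal basis $e_1,\dots,e_n$ of $\C^n$ with $e_1,\dots,e_k$ spanning $(\ker T)^\perp$ and $e_{k+1},\dots,e_n$ spanning $\ker T$, where $k=\dim_\C V$. Writing $a=\sum_j a_je_j$, the vectors $e_{k+1},\dots,e_n\in\ker T$ drop out, so $T(a)=A(a_1,\dots,a_k)$, where $A:\C^k\to V$, $A(a_1,\dots,a_k)=T(\sum_{j=1}^k a_je_j)$, is a linear isomorphism; that is, $T=A\circ\pi_k$ in these coordinates. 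Because $\{e_j\}$ is orthonormal, $\ga_n$ is still the standard Gaussian in these coordinates.

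Since $A$ is a bijection, $\sigma(T)=\sigma(\pi_k)$ and $\{T=y\}=\{\pi_k=A\inv(y)\}$, whence $\E_{\ga_n}(X|T=y)=\E_{\ga_n}(X|\pi_k=A\inv(y))$. Applying \eqref{condgauss} then gives $\E_{\ga_n}(X|T=y)=\E_{\ga_{n-k}}(X_{A\inv(y)})$, where $X_{y'}(a')=X(y',a')$ and $\ga_{n-k}$ is the standard Gaussian in the $\ker T$ coordinates $a'=(a_{k+1},\dots,a_n)$. Setting $y=0$ and using $A\inv(0)=0$ yields the ``furthermore'' assertion directly: $X_0$ is exactly the restriction $X'$ of $X$ to $\ker T$, and by Definition \ref{defcg} the Gaussian $\ga_{n-k}$ on the orthonormal coordinates of $\ker T$ is precisely the conditional Gaussian measure $\ga_{\ker T}$, so $\E_{\ga_n}(X|T=0)=\E_{\ga_{\ker T}}(X')$.

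It remains to establish continuity of $y\mapsto\E_{\ga_n}(X|T=y)$ on $V$. Since $A\inv:V\to\C^k$ is continuous, it suffices to show that $y'\mapsto\E_{\ga_{n-k}}(X_{y'})=\int_{\C^{n-k}}X(y',a')\,d\ga_{n-k}(a')$ is continuous on $\C^k$, and I would do this by dominated convergence. For a sequence $y'_j\to y'_0$ the integrands are bounded uniformly by the given bound on $|X|$, which is $\ga_{n-k}$-integrable, so the only thing to verify is that $X(y'_j,a')\to X(y'_0,a')$ for $\ga_{n-k}$-almost every $a'$. The hypothesis that $E\cap T\inv(y)$ is null in the fiber translates, in the adapted coordinates, into the statement that for each fixed $y'_0\in\C^k$ the set $\{a':(y'_0,a')\in E\}$ is Lebesgue-null in $\C^{n-k}$. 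For $a'$ off this null set we have $(y'_0,a')\notin E$; since $E$ is closed, this point has a neighborhood disjoint from $E$, so $(y'_j,a')\to(y'_0,a')$ forces $(y'_j,a')\notin E$ for all large $j$, and then continuity of $X$ on $\C^n\sm E$ gives $X(y'_j,a')\to X(y'_0,a')$.

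I expect this last point to be the crux of the argument, and it is exactly where both structural hypotheses on $E$ enter: the measure-zero condition on the fibers ensures that the pointwise convergence holds off a null set, while the \emph{closedness} of $E$ ensures that a limit point avoiding $E$ is avoided by an entire tail of the approximating sequence, so that the continuity of $X$ away from $E$ may legitimately be invoked. The reduction to $\pi_k$ and the identification of the conditional measure are comparatively routine once the adapted orthonormal basis is fixed.
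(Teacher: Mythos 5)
Your proposal is correct and follows essentially the same route as the paper's own proof: the identical reduction of $T$ to the coordinate projection $\pi_k$ via a unitary choice of coordinates adapted to $\ker T$ (the paper phrases this as ``assume WLOG $\ker T=\{0\}\times\C^{n-k}$,'' which is exactly your orthonormal-basis argument), followed by the Gelfand--Leray formula \eqref{condgauss} and the identification of $\ga_{n-k}$ in these coordinates with $\ga_{\ker T}$ of Definition \ref{defcg}. The only minor divergence is the continuity step, where you use dominated convergence with pointwise a.e.\ convergence off the null slice $\{a':(y'_0,a')\in E\}$, while the paper chooses a compact $K\subset\C^{n-k}$ with $(\{y_0\}\times K)\cap E=\emptyset$ and $\ga_{n-k}(\C^{n-k}\sm K)<\ep/\sup|X|$ and uses uniform convergence $X_y\to X_{y_0}$ on $K$ plus the $\ep$-tail bound; both devices exploit the closedness of $E$ and the fiberwise nullity in precisely the way you identified as the crux.
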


\begin{proof} Let $k=\dim V$.  We can assume without loss of generality that $
\ker T= \{0\}\times \C^{n-k}$.  Then the map $T$ has the same fibers as the
projection $\pi_k(a_1,\dots,a_n)=(a_1,\dots,a_k)$, and thus $\sigma(T)=
\sigma(\pi_k)$. Hence we can assume without loss of generality that $V=\C^k$
and $T=\pi_k$.

Fix $y_0\in\C^k$ and let $\ep>0$ be arbitrary. Choose a compact set $K\subset
\C^{n-k}$ such that
$(\{y_0\}\times K)\cap E=\emptyset$ and $\ga_{n-k}(\C^{n-k}\sm K) <\ep/\sup |X|
$. Since $E$ is closed, $(\{y\}\times K)\cap E=\emptyset$, for $y$ sufficiently
close to $y_0$. As above, we let $X_y(a')=X(y,a')$ for $a'\in\C^{n-k}$. Since
$X_y\to X_{y_0}$ uniformly on $K$,
we have
\begin{equation}\label{l1}\lim_{y\to y_0}\int_K X_y\,d\ga_{n-k} =
\int_KX_{y_0}\,d\ga_{n-k}\;.\end{equation}

It follows from \eqref{condgauss} that
\begin{equation}\label{l2} \left|\E_{\ga_n}(X|\pi_k=y) - \int_KX_y\,d\ga_{n-k}
\right|
= \left|\int_{\C^{n-k}\sm K}X_y\,d\ga_{n-k}\right|<\ep\,,\end{equation}
for all $y\in\C^k$.  The first conclusion is an immediate consequence of \eqref{l1}--
\eqref{l2} and the formula for $\E_{\ga_n}(X|T=0)$ follows from \eqref{condgauss} with $y=0$.
\end{proof}

\subsection{Conditioning on the values of sections}  We now state precisely
what is meant by the expected zeros conditioned on  sections having  specific
values at one or several points on the manifold:

\begin{defin}\label{defconditioning}  Let $(L,h)$ be a positive Hermitian
holomorphic line bundle over a compact \kahler manifold $M$ with \kahler form $
\om_h$.  Let $p_1,\dots,p_r$ be distinct points of $M$. Let  $N\gg 0$ and give
$H^0(M,L^N)$ the induced Hermitian Gaussian measure $\ga_N$. Let $v_j\in
L^N_{p_j}$, for $1\le j\le r$. We let $$T: H^0(M,L^N)\to L^N_{p_1}\oplus\cdots
\oplus L^N_{p_r}\,,\qquad s\mapsto s(p_1)\oplus\cdots\oplus s(p_r)\,.$$
The  expected zero current $\E\big(Z_s\big|s(p_1)=v_1,\,\dots,\,s(p_r)=v_r\big)
$ conditioned on the section taking the fixed values $v_j$ at the points $p_j$
is defined by:
$$\bigg(\E_N\big(Z_s\big|s(p_1)=v_1,\,\dots,\,s(p_r)=v_r\big)\,,\,\phi\bigg)\ =\
\E_{\ga_N}\big((Z_s,\phi)\big|T=
v_1\oplus\cdots\oplus v_r\big)\,,$$ for smooth test forms $\phi\in
\dcal^{m-1,m-1}(M)$.

\end{defin}

\begin{lem}\label{continuity1}
The  mapping $$ v_1\oplus\cdots\oplus v_r \mapsto \E_N\big(Z_s\big|s(p_1)=v_1,\,
\dots,\,s(p_r)=v_r\big)$$ is a continuous map  from $L^N_{p_1}\oplus\cdots
\oplus L^N_{p_r}$ to $ \dcal'^{1,1}(M)$. \end{lem}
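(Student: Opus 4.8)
The plan is to unwind Definition \ref{defconditioning} and verify continuity in the weak-$*$ topology one test form at a time. The asserted map is continuous into $\dcal'^{1,1}(M)$ precisely when, for each fixed $\phi\in\dcal^{m-1,m-1}(M)$, the scalar function
$$v\ \longmapsto\ I(v):=\big(\E_N(Z_s\mid T=v),\phi\big)=\E_{\ga_N}\big((Z_s,\phi)\mid T=v\big)$$
is continuous on $L^N_{p_1}\oplus\cdots\oplus L^N_{p_r}$, where $T(s)=s(p_1)\oplus\cdots\oplus s(p_r)$. For $N\gg0$ the evaluation map $T$ is surjective, so each fiber $T\inv(v)$ is the affine space $s_v+\ker T$, where $s_v\in(\ker T)^\perp$ is the unique minimal-norm section with $T(s_v)=v$; note that $s_v$ depends \emph{linearly}, hence continuously, on $v$. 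Choosing an orthonormal basis of $H^0(M,L^N)$ adapted to $\ker T$ identifies $T$ with a coordinate projection, so \eqref{condgauss} gives the fiber-integral representation
$$I(v)=\int_{\ker T}(Z_{s_v+s'},\phi)\,d\ga_{\ker T}(s')\,,$$
with $\ga_{\ker T}$ the conditional Gaussian measure on $\ker T$. Continuity of $I$ will follow from a dominated/Vitali convergence argument applied to this integral.

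For the pointwise ingredient I would use that $s\mapsto Z_s$ is weak-$*$ continuous on $H^0(M,L^N)\sm\{0\}$: by the Poincar\'e--Lelong formula \eqref{PL} and local uniform convergence of sections in the finite-dimensional space $H^0(M,L^N)$, one has $\log\|s_n\|_{h^N}\to\log\|s\|_{h^N}$ in $L^1_{\mathrm{loc}}$ whenever $s_n\to s\not\equiv0$, whence $Z_{s_n}\to Z_s$ as currents. Since $s_v\in(\ker T)^\perp$ and $s'\in\ker T$ are orthogonal, $s_v+s'\neq0$ for every $v$ as soon as $s'\neq0$; thus for $\ga_{\ker T}$-almost every $s'$ the integrand $v\mapsto(Z_{s_v+s'},\phi)$ is continuous.

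The main work, and the main obstacle, is domination: $(Z_s,\phi)$ is unbounded, so Proposition \ref{special} does not apply directly and one cannot simply quote it. Writing $(Z_s,\phi)=\frac i\pi\int_M\log\|s\|_{h^N}\,\ddbar\phi+c_\phi$ from \eqref{PL} (with $c_\phi$ independent of $s$), I would fix a compact neighborhood $K$ of the target point and bound $\sup_{v\in K}\|(Z_{s_v+s'},\phi)\|_{L^{1+\de}(\ga_{\ker T})}$ for some $\de>0$. By Fubini it suffices to control $\int_M\E_{s'}\big|\log\|(s_v+s')(z)\|_{h^N}\big|^{1+\de}\,\Om_M(z)$ uniformly in $v\in K$. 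The key simplification is that, for fixed $z$, $(s_v+s')(z)$ is a complex Gaussian with mean $s_v(z)$ and variance $\sigma(z)^2=\|\Pi_{\ker T}(z,z)\|_{h^N}$ that is \emph{independent of} $v$, being the diagonal of the conditional \szego kernel of $\ker T$. An elementary estimate for $\E|\log|W||^{1+\de}$ with $W$ complex Gaussian then yields a bound of the form $C\big(|\log\sigma(z)|^{1+\de}+\log^{1+\de}(1+|s_v(z)|)+1\big)$.

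It remains to see that this bound is $\Om_M$-integrable uniformly in $v\in K$. Here $|s_v(z)|\le C|v|$ is bounded for $v\in K$, while $z\mapsto\log\sigma(z)=\half\log\|\Pi_{\ker T}(z,z)\|_{h^N}$ is $\Om_M$-integrable: for $N\gg0$ the base locus of $\ker T$ reduces to $\{p_1,\dots,p_r\}$, so $\sigma$ is smooth, positive, and bounded above off these finitely many points, where $\|\Pi_{\ker T}(z,z)\|_{h^N}$ vanishes like $|z-p_j|^2$ (every section of $\ker T$ vanishes at $p_j$), giving an integrable $\log|z-p_j|$ singularity. This produces the required uniform $L^{1+\de}$ bound; by de la Vall\'ee-Poussin the family $\{(Z_{s_v+s'},\phi)\}_{v\in K}$ is uniformly integrable, so Vitali's theorem combined with the a.e.\ pointwise continuity above gives $I(v_n)\to I(v_0)$ for every sequence $v_n\to v_0$ in $K$. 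As $\phi$ was arbitrary, the map $v\mapsto\E_N(Z_s\mid T=v)$ is continuous into $\dcal'^{1,1}(M)$, proving the lemma.
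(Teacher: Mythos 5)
Your overall strategy---a fiber-average representation of the conditional expectation, a.e.\ continuity of the integrand in $v$, and uniform integrability via $L^{1+\de}$ bounds---would indeed yield the lemma, but it is built on a false premise, and that premise is exactly where you part company with the paper. The random variable $X(s)=(Z_s,\phi)$ is \emph{not} unbounded: $Z_s$ is a positive $(1,1)$-current whose mass is cohomologically fixed, since by the Poincar\'e--Lelong formula \eqref{PL} and Stokes' theorem $(Z_s,\om_h^{m-1})=\frac N\pi\int_M\om_h^m$ for every $s\neq 0$, whence $|X(s)|\le (\sup\|\phi\|)\,\frac N\pi\int_M\om_h^m$ uniformly in $s$. (In your potential-theoretic formulation this is reflected in the scale invariance of $\int_M\log\|s\|_{h^N}\,\ddbar\phi$ under $s\mapsto ts$, because $\int_M\ddbar\phi=0$ on the compact $M$, combined with continuity of this integral on the unit sphere of the finite-dimensional space $H^0(M,L^N)$.) Consequently Proposition \ref{special} \emph{does} apply directly with $E=\{0\}$, and that is the paper's entire proof: continuity of $X$ on $H^0(M,L^N)\sm\{0\}$, quoted from Stoll's theorem on continuity of fiber integrals \cite[Th.~3.8]{St} (your $L^1_{\mathrm{loc}}$-convergence argument for $\log\|s_n\|_{h^N}$ is an acceptable substitute), plus the boundedness above, plus Proposition \ref{special}.

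Your uniform-integrability detour is essentially sound as mathematics: the value $(s_v+s')(z)$ is indeed Gaussian with $v$-independent variance $\|\Pi_{\ker T}(z,z)\|_{h^N}$, this diagonal vanishes to exactly second order at each $p_j$ for $N\gg 0$ (by jet surjectivity, which the paper invokes elsewhere, in the proof of Lemma \ref{product}), and the resulting logarithmic singularities are integrable. But two steps need repair even within your scheme: passing from the pointwise $L^{1+\de}(\ga_{\ker T})$ bound at each $z$ to an $L^{1+\de}$ bound on the $M$-integral requires Minkowski's integral inequality, not plain Fubini; and the elementary Gaussian estimate gives $\log^{1+\de}\big(1+|s_v(z)|/\sigma(z)\big)$ rather than $\log^{1+\de}(1+|s_v(z)|)$, which costs an additional $|\log\sigma(z)|$ term (harmless, since it is integrable, but it should be accounted for). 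The larger point is that all of this machinery is rendered unnecessary by the one-line boundedness observation you denied; the only substantive ingredient your argument genuinely shares with the paper's is the continuity of $s\mapsto Z_s$ away from $s=0$.
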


\begin{proof} Let $N$ be sufficiently large so that $T$ is surjective.  Let  $
\phi\in \dcal^{m-1,m-1}(M)$ be a smooth test form, and consider the random
variable $X(s)=(Z_s,\phi)$ on $H^0(M,L^N)\sm \{0\}$. By \cite[Th.~3.8]{St}
applied to the projection $$\{(s,z)\in H^0(M,L^N)\times M: s(z)=0\}\to
H^0(M,L^N)\,,$$ the random variable $X$ is continuous on $H^0(M,L^N)\sm \{0\}$.
Furthermore, $X$ is bounded, since we have  by \eqref{PL}, $$|X(s)| \le (\sup
\|\phi\|) \,(Z_s,\om^{m-1}) = \frac N\pi (\sup \|\phi\|) \,\int_M\om_h^m\,,$$
The conclusion follows from
Proposition \ref{special} with $E=\{0\}$.\end{proof}

We could just as well condition on the section having specific derivatives, or
specific $k$-jets, at specific points.  At the end of this section, we discuss
the conditional zero currents of simultaneous sections.

We are particularly interested in the case where the $v_j$ all vanish. In this
case, the conditional expected current $\E_N\big(Z_s\big|s(p_1)=
\cdots=s(p_r)=0\big)$ is well-defined and we have:

\begin{lem}\label{linear} Let $(L,h)\to (M,\om_h)$ and  $(H^0(M,L^N),
\gamma_{h})$
 be as in Theorem \ref{scaled}. Let $p_1,\dots,p_r$ be distinct points of $M$
and let $H_N^{p_1\cdots p_r}\subset H^0(M,L^N)$ denote the space of holomorphic
sections of $L^N$ vanishing at  the points $p_1,\dots,p_r$. Then
$$\E_N\big(Z_s\big|s(p_1)=\cdots=s(p_r)=0\big)\ =\ \E_{\ga_N^{p_1\cdots p_r}}(Z_s)\ =\ \frac i{2\pi} \ddbar\log\|
\Pi_N^{p_1\cdots p_r}(z,z)\|_{h^N} +\frac N{\pi}\,\om_h\,,$$ where $\ga_N^{p_1\cdots p_r}$ is the conditional Gaussian measure on $H_N^{p_1\cdots p_r}$, and 
$\Pi_N^{p_1\cdots p_r}$ is the \szego kernel for the orthogonal projection onto
$H_N^{p_1\cdots p_r}$.
\end{lem}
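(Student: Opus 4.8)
The plan is to recognize that the conditional expectation on the left is, by the general framework developed in Section \ref{conditioning}, exactly an unconditional expectation over a restricted Gaussian ensemble, and then to apply the probabilistic Poincar\'e--Lelong formula (Lemma \ref{important lemma}) to that restricted ensemble. First I would observe that the conditioning map is the evaluation map $T\colon H^0(M,L^N)\to L^N_{p_1}\oplus\cdots\oplus L^N_{p_r}$ from Definition \ref{defconditioning}, whose kernel is precisely $H_N^{p_1\cdots p_r}$, the space of sections vanishing at all the $p_j$. For $N$ large, $T$ is surjective (by positivity of $L$ and the asymptotic basepoint-freeness / very ampleness of $L^N$), so $\dim H_N^{p_1\cdots p_r}=d_N-r$ and $T$ is a genuine submersion onto its target.

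The first main step is to identify the conditional expected current with an unconditional one. By Definition \ref{defconditioning}, $\big(\E_N(Z_s\mid s(p_1)=\cdots=s(p_r)=0),\phi\big)=\E_{\ga_N}\big((Z_s,\phi)\mid T=0\big)$ for each test form $\phi\in\dcal^{m-1,m-1}(M)$. I would apply Proposition \ref{special}, taking $X(s)=(Z_s,\phi)$ (which is bounded and continuous off $E=\{0\}$, as established in the proof of Lemma \ref{continuity1}), to conclude that
$$\E_{\ga_N}\big((Z_s,\phi)\mid T=0\big)\ =\ \E_{\ga_N^{p_1\cdots p_r}}\big((Z_s,\phi)\big),$$
where $\ga_N^{p_1\cdots p_r}$ is the conditional Gaussian measure on the subspace $\ker T=H_N^{p_1\cdots p_r}$ in the sense of Definition \ref{defcg}. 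Since this holds for every $\phi$, it gives the first equality of the lemma at the level of currents: $\E_N(Z_s\mid s(p_1)=\cdots=s(p_r)=0)=\E_{\ga_N^{p_1\cdots p_r}}(Z_s)$.

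The second step is purely computational: apply Lemma \ref{important lemma} to the Gaussian random field $\scal=H_N^{p_1\cdots p_r}$ with its conditional Gaussian measure. The two-point function of this field is by definition the kernel $\Pi_N^{p_1\cdots p_r}$ of orthogonal projection onto $H_N^{p_1\cdots p_r}$, since $\ga_N^{p_1\cdots p_r}$ is induced by the restricted inner product and hence has orthonormal basis any orthonormal basis of the subspace. Lemma \ref{important lemma} then yields immediately
$$\E_{\ga_N^{p_1\cdots p_r}}(Z_s)=\frac{i}{2\pi}\ddbar\log\|\Pi_N^{p_1\cdots p_r}(z,z)\|_{h^N}+\frac{N}{2\pi}\sqrt{-1}\,\Theta_h,$$
and using $\om_h=\tfrac{\sqrt{-1}}{2}\Theta_h$ rewrites the curvature term as $\frac{N}{\pi}\om_h$, giving the stated formula.

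I expect the only delicate point to be verifying the hypotheses of Proposition \ref{special}, chiefly that the evaluation map $T$ is surjective so that $H_N^{p_1\cdots p_r}$ is a genuine hyperplane intersection (codimension exactly $r$) and $\ga_N^{p_1\cdots p_r}$ is a nondegenerate Gaussian; this is where the hypothesis $N\gg0$ and the positivity of $L$ enter. The boundedness and off-$\{0\}$ continuity of $X(s)=(Z_s,\phi)$ is already handled in the proof of Lemma \ref{continuity1}, so no new work is needed there. Everything else is a direct invocation of results established earlier in the excerpt.
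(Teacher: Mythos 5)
Your proposal is correct and follows essentially the same route as the paper: Proposition \ref{special} (with $X(s)=(Z_s,\phi)$, bounded and continuous off $E=\{0\}$ as in the proof of Lemma \ref{continuity1}) identifies the conditional expectation with the unconditional expectation over the conditional Gaussian on $\ker T=H_N^{p_1\cdots p_r}$, and then Lemma \ref{important lemma} applied to $\scal=H_N^{p_1\cdots p_r}$ gives the \szego-kernel formula, with $\frac{N}{2\pi}\sqrt{-1}\,\Theta_h=\frac N\pi\om_h$. Your added remarks on surjectivity of $T$ for $N\gg 0$ and on the two-point function being the projection kernel are exactly the points the paper leaves implicit.
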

\begin{proof} Let $\phi\in \dcal^{m-1,m-1}(M)$ be a smooth test form.  By
Proposition \ref{special},
$$\bigg(\E_N\big(Z_s\big|s(p_1)=\cdots=s(p_r)=0\big)\,,\,\phi\bigg)\ =\  \E_N
\big((Z_s,\phi)\big|T=
0\big)\ =\ \E_{\ga_N^{p_1\cdots p_r}}(Z_s,\phi)\,,$$ where $T$ is as in Definition
\ref{defconditioning}.
By Lemma \ref{important lemma} with $\scal = H_N^{p_1\cdots p_r}$, we then have
\begin{equation*}\E_{\ga_N^{p_1\cdots p_r}}(Z_s,\phi) =\ \left(\frac i{2\pi}
\ddbar\log\|\Pi_N^{p_1\cdots p_r}(z,z)\|_{h^N} +\frac N{\pi}\,\om_h\,,\,\phi
\right).\end{equation*}
\end{proof}

Recalling the definition of $P_N$ from (\ref{PN}), we now prove:
\begin{prop} \label{cond} We have  \begin{align}
\E_N\big(Z_s\big|s(p)=0\big) &=\E_N\big(Z_s\big)+\frac i{2\pi} \ddbar
\log \left( 1 -
            P_N(z,p)^2\right)\,,\end{align}
            \end{prop}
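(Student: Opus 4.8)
The plan is to apply Lemma \ref{linear} in the special case $r=1$, $p_1=p$, and then compute the difference between the conditioned and unconditioned expected zero currents by relating the conditional \szego kernel $\Pi_N^p$ to the full \szego kernel $\Pi_N$. The starting point is that, by Lemma \ref{important lemma} (with $\scal=H^0(M,L^N)$) and by Lemma \ref{linear} (with $\scal=H_N^p$), both expected currents have the form $\frac i{2\pi}\ddbar\log\|\Pi(z,z)\|_{h^N}+\frac N\pi\om_h$, where $\Pi$ is the appropriate kernel. Since the curvature term $\frac N\pi\om_h$ is identical in both, subtracting gives
\begin{equation*}
\E_N\big(Z_s\big|s(p)=0\big)-\E_N(Z_s)=\frac i{2\pi}\ddbar\log\frac{\|\Pi_N^p(z,z)\|_{h^N}}{\|\Pi_N(z,z)\|_{h^N}}\,.
\end{equation*}
So the whole proposition reduces to identifying this ratio with $1-P_N(z,p)^2$.

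The key algebraic step is to express $\Pi_N^p$ in terms of $\Pi_N$. The space $H_N^p$ is the orthogonal complement in $H^0(M,L^N)$ of the one-dimensional subspace spanned by the ``coherent state'' section $\Pi_N(\cdot,p)$ (the unique section, up to scalar, whose inner product with any $s$ recovers $s(p)$ via the reproducing property). Writing the orthogonal projection $\Pi_N^p$ as $\Pi_N$ minus the rank-one projection onto this coherent state, I would obtain the pointwise identity
\begin{equation*}
\Pi_N^p(z,z)=\Pi_N(z,z)-\frac{\Pi_N(z,p)\otimes\overline{\Pi_N(z,p)}}{\|\Pi_N(p,p)\|_{h^N}}\,.
\end{equation*}
Taking $h^N$-norms on the diagonal and factoring out $\|\Pi_N(z,z)\|_{h^N}$ then yields
\begin{equation*}
\frac{\|\Pi_N^p(z,z)\|_{h^N}}{\|\Pi_N(z,z)\|_{h^N}}=1-\frac{\|\Pi_N(z,p)\|_{h^N}^2}{\|\Pi_N(z,z)\|_{h^N}\,\|\Pi_N(p,p)\|_{h^N}}=1-P_N(z,p)^2\,,
\end{equation*}
where the last equality is exactly the definition \eqref{PN} of the normalized \szego kernel $P_N$. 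Combining this with the displayed difference of currents gives the claimed formula.

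The step I expect to require the most care is justifying the rank-one correction formula for $\Pi_N^p(z,z)$ and, in particular, checking that the $h^N$-norm of the tensor $\Pi_N(z,p)\otimes\overline{\Pi_N(z,p)}$ evaluated on the diagonal is precisely $\|\Pi_N(z,p)\|_{h^N}^2$, so that the squared normalized kernel $P_N(z,p)^2$ appears rather than some unsquared or unnormalized quantity. The main obstacle is thus bookkeeping the Hermitian norms and the reproducing property correctly on the bundle $L^N_z\otimes\overline{L^N_z}$; once the rank-one subtraction is set up, the identification with $1-P_N^2$ is immediate from \eqref{PN}, and the $\ddbar$ of the curvature term cancels automatically.
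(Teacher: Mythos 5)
Your proposal is correct and follows essentially the same route as the paper: Lemma \ref{linear} with $r=1$, then the rank-one subtraction $\Pi_N^p(z,w)=\Pi_N(z,w)-\Phi_N^p(z)\otimes\overline{\Phi_N^p(w)}$ via the coherent state $\Phi_N^p=\Pi_N(\cdot,p)/\|\Pi_N(p,p)\|_{h^N}^{1/2}$, whose orthogonality to $H_N^p$ and unit norm follow from the reproducing property exactly as you indicate, giving $\|\Pi_N^p(z,z)\|_{h^N}=\|\Pi_N(z,z)\|_{h^N}\left(1-P_N(z,p)^2\right)$. The bundle bookkeeping you flag is handled in the paper by tensoring with a fixed unit vector $e_p^{\otimes N}$ to view the coherent state as an honest element of $H^0(M,L^N)$, after which your computation goes through verbatim.
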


\begin{proof} As above, we let $H^p_N\subset
H^0(M, L^N)$ denote the space of  holomorphic sections vanishing
at $p$. Let $\{S^p_{Nj}:j=1,...,d_N-1\}$ be an orthonormal basis of $H^p_N$.
The Szeg\"o projection
$\Pi_N^p$ is given by
$$\Pi^p_N(z,w)=\sum S^p_{Nj}(z)\otimes \overline{S^p_{Nj}(w)}\,.$$
By Lemma \ref{linear} with $r=1$, we have
\begin{equation} \label{conditionalPL} \E_N\big(Z_s\big|s(p)=0\big) = \frac
i{2\pi} \ddbar\log\|\Pi_N^p(z,z)\|_{h^N} +\frac N\pi \om_h\,.\end{equation}

To give a formula for $\Pi_N^p(z,z)$, we consider the {\it
coherent state\/} at $p$, $\Phi^p_N(z)$ defined as follows: Let
\begin{equation}\label{coherent}\wh\Phi^p_N(z):=\frac{\Pi_N(z,p)}{\|\Pi_N(p,p)
\|_{h^N}^{1/2}} \in H^0(M,L^N)\otimes\overline L^N_p\,,\end{equation}
We choose a unit vector $e_p\in L_p$, and we let $\Phi^p_N\in
H^0(M,L^N)$ be given by
\begin{equation}\label{coherent1}\wh\Phi^p_N(z) = \Phi^p_N(z)  \otimes
\overline{e^{\otimes N}_p}\,.\end{equation}

The coherent state $\Phi^p_N$ is orthogonal to $H^p_N$, because
\begin{equation}\label{orthog}s\in H^p_N \ \implies\ \|\Pi_N(p,p)\|_{h^N}^{1/2}
\lla s,\wh\Phi_N^p\rra=\int_M\Pi_N(p,z)\,s(z)\,\Om_M(z)=s(p)=0
\end{equation}
Furthermore, $\|\Phi_N^p\|_{h^N}^2=1$, and hence
$\{S^p_{Nj}:j=1,...,d_N-1\}\cup\{\Phi^p_N\}$ forms an orthonormal
basis for $H^0(M,L^N)$. Therefore
\begin{equation}\label{cond szego}\Pi^p_N(z,w)=\Pi_N(z,w)-\Phi^p_{N}(z)\otimes
\overline{\Phi^p_{N}(w)}\,,\end{equation}
and in particular
\begin{equation} \label{condszegodiag}\|\Pi^p_N(z,z)\|_{h^N}= \|\Pi_N(z,z)\|
_{h^N}- \|\Phi^p_{N}(z)\|^2_{h^N}\,.
\end{equation}
Thus, by \eqref{condszegodiag},
\begin{eqnarray*} \log \|\Pi_N^p(z,z)\|_{h^N}  & = &  \log \left( \|\Pi_N(z,z)
\|_{h^N} -  \frac{\|\Pi_N(z,p)\|_{h^N}^2}{\|\Pi_N(p,p)\|_{h^N}}\right)\nonumber
\\
& = &  \log \|\Pi_N(z,z)\|_{h^N} +  \log \left( 1 -
P_N(z,p)^2\right)  .  \end{eqnarray*}

By \eqref{conditionalPL} and \eqref{condszegodiag},
\begin{align}\label{conda}
\E_N\big(Z_s\big|s(p)=0\big) &=\frac i{2\pi}\ddbar \log
\|\Pi_N(z,z)\|_{h^N} +\frac i{2\pi} \ddbar \log \left( 1 -
            P_N(z,p)^2\right)+\frac N\pi \om_h\notag\\
&=\E_N\big(Z_s\big)+\frac i{2\pi} \ddbar \log \left( 1 -
            P_N(z,p)^2\right)\,,\end{align}
            concluding the proof of the Proposition.\end{proof}

Theorem \ref{scaled} involves the conditional zero current of a system of
random sections, which we now define precisely:

\begin{defin} \label{defcondk} Let $(L,h)$ be a positive Hermitian holomorphic
line bundle
 over a compact \kahler manifold $M$ with \kahler form $\om_h$, let $1\le k\le
m=\dim M$,
  and let $p\in M$.  Let  $N\gg 0$ and give $H^0(M,L^N)$ the induced Hermitian
Gaussian measure $\ga_N$.
   We let $$T: \bigoplus^kH^0(M,L^N)\to \oplus^kL^N_p\,,$$ where $\bigoplus^kV$
denotes $k$-tuples in $V$.
The conditional expected zero current $\E_N\big(Z_{s_1,\dots,s_k}\big|
s_1(p)=v_1,\,\dots,\,s_k(p)=v_k\big)$  is defined by:
$$\bigg(\E_N\big(Z_{s_1,\dots,s_k}\big|s_1(p)=v_1,\,\dots,\,s_k(p)=v_k\big)\,,\,
\phi\bigg)\ =\ \E_{\ga_N^k}\big((Z_{s_1,\dots,s_k},\phi)\big|T=(v_1,\dots,v_k)
\big)\,,$$ for smooth test forms $\phi\in \dcal^{m-k,m-k}(M)$.
The conditional expected zero distrbution is the current $$K_k^N(z|p):= \E_N\big(Z_{s_1,\dots,s_k}\big|
s_1(p)=0,\,\dots,\,s_k(p)=0\big)\,,$$ which is well defined according to the following lemma.
\end{defin}

\begin{lem}\label{continuity} For $N\gg 0$, the  mapping $$ (v_1,\dots,v_k)
\mapsto\E_N\big(Z_{s_1,\dots,s_k}\big|s_1(p)=v_1,\,\dots,\,s_k(p)=v_k\big)$$ is
 a continuous map  from $\bigoplus^kL^N_p$ to $\dcal'^{m-k,m-k}(M)$.
\end{lem}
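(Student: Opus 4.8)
The plan is to reduce the statement to Proposition~\ref{special}, exactly as Lemma~\ref{continuity1} did in the one-section case. Take $N$ large enough that the evaluation map $H^0(M,L^N)\to L^N_p$ is onto, so that $T$ is surjective, and identify $\bigoplus^k H^0(M,L^N)$ with $\C^n$ ($n=k\,d_N$) and $\oplus^k L^N_p$ with $\C^k$ via the unit vector $e_p$. Fix a test form $\phi\in\dcal^{m-k,m-k}(M)$ and set $X(s_1,\dots,s_k)=(Z_{s_1,\dots,s_k},\phi)$. Because $\dcal'^{m-k,m-k}(M)$ carries the weak topology, it suffices to show that $(v_1,\dots,v_k)\mapsto\E_{\ga_N^k}\big(X\,\big|\,T=(v_1,\dots,v_k)\big)$ is continuous on $\C^k$ for each such $\phi$; this is exactly the conclusion of Proposition~\ref{special} once its three hypotheses are checked for $X$ and a suitable closed set $E$.

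I would take $E\subset\C^n$ to be the \emph{improper-intersection locus}: the set of tuples $(s_1,\dots,s_k)$ whose joint zero set $\{z:s_1(z)=\cdots=s_k(z)=0\}$ fails to have pure codimension $k$ (this contains, in particular, every tuple with some $s_j\equiv 0$). Applying the semicontinuity of fiber dimension to the proper projection $\zcal=\{((s_1,\dots,s_k),z):s_1(z)=\cdots=s_k(z)=0\}\to\bigoplus^k H^0(M,L^N)$ shows that $E$ is closed, indeed a proper analytic subvariety for $N\gg0$. Off $E$ the intersection is proper, $Z_{s_1,\dots,s_k}$ is the well-defined positive closed $(k,k)$ integration current, and $X$ is continuous there by Stoll's continuity theorem \cite[Th.~3.8]{St} applied to $\zcal$, exactly as in Lemma~\ref{continuity1}. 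Boundedness of $X$ is cohomological: off $E$ the current $Z_{s_1,\dots,s_k}$ lies in the fixed class $(N/\pi)^k[\om_h]^k$, so $|X(s_1,\dots,s_k)|\le(\sup\|\phi\|)\,(Z_{s_1,\dots,s_k},\om_h^{m-k})=(N/\pi)^k(\sup\|\phi\|)\int_M\om_h^m$, independent of the sections.

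The main obstacle is the remaining hypothesis of Proposition~\ref{special}: for every $y=(v_1,\dots,v_k)$ the set $E\cap T\inv(y)$ must have Lebesgue measure $0$ in the fiber $T\inv(y)=\prod_{j=1}^k\{s:s(p)=v_j\}$. Since $E$ is analytic, $E\cap T\inv(y)$ is analytic in the affine fiber and so has measure zero \emph{unless it is the whole fiber}; hence it is enough to produce, for every $y$, a single tuple in $T\inv(y)$ with proper intersection. I would obtain this by an inductive Bertini argument: for $N\gg0$ the bundle $L^N$ is very ample, so each affine system $\{s:s(p)=v_j\}$ is base-point free away from $p$ and spans $1$-jets at $p$; the single value constraint at $p$ is one linear condition per factor and does not destroy this, so a generic member cuts the previously chosen intersection $Z_{s_1}\cap\cdots\cap Z_{s_{j-1}}$ transversally both away from $p$ (by base-point-freeness there) and at $p$ (by the $1$-jet spanning, choosing $ds_1(p),\dots,ds_k(p)$ independent when needed). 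Iterating yields a transverse, hence proper, tuple in each fiber, which gives the required measure-zero property.

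With all three hypotheses verified, Proposition~\ref{special} shows that $(v_1,\dots,v_k)\mapsto\E_{\ga_N^k}\big(X\,\big|\,T=(v_1,\dots,v_k)\big)$ is continuous for each fixed $\phi$, and the weak topology on $\dcal'^{m-k,m-k}(M)$ then gives continuity of the conditional-current-valued map, as claimed.
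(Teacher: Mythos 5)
Your proposal is correct and takes essentially the same route as the paper: the paper's proof likewise reduces to Proposition \ref{special} by taking $E$ to be the locus of tuples whose joint zero set has improper dimension, noting that $E\cap T\inv(v_1,\dots,v_k)$ is a proper algebraic subvariety of each fiber and hence has measure zero, and then handling continuity and boundedness of $X=(Z_{s_1,\dots,s_k},\phi)$ exactly as in Lemma \ref{continuity1} via Stoll's theorem and the cohomological mass bound. The only difference is one of detail: you spell out the closedness of $E$ (semicontinuity of fiber dimension) and the Bertini-type induction producing a proper-intersection tuple in every fiber, steps the paper compresses into the single assertion that ampleness of $L$ makes $E\cap T\inv(v_1,\dots,v_k)$ proper for $N\gg 0$.
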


\begin{proof} Let
$$E=\{(s_1,\dots,s_k)\in \bigoplus^kH^0(M,L^N):\dim Z_{s_1,\dots,s_k} = n-k\}
\,.$$
Since $L$ is ample, for $N$ sufficiently large, $E\cap T\inv(v_1,\dots,v_k)$ is
a proper algebraic subvariety of $T\inv(v_1,\dots,v_k)$ and hence has Lebesgue
measure 0 in $T\inv(v_1,\dots,v_k)$, for all $(v_1,\dots,v_k)\in \oplus^kL^N_p
$.  Then Proposition \ref{special} applies with $\C^n$ replaced by $
\oplus^kH^0(M,L^N)$, and continuity follows exactly as in the proof of Lemma
\ref{continuity1}.\end{proof}

\section{Proof of Theorem \ref{unscaled}}\label{proof1}

\subsection{Proof for $k=1$}

We first prove Theorem \ref{unscaled} when the condition is that
$s(p) = 0$ for a single point $p$.

\begin{proof}

Let  $\phi\in \dcal'^{m-1,m-1}(M)$ be a smooth test form. By
Proposition \ref{cond}, we have
\begin{equation}\label{cond1} \Big( \E_N(Z_s:s(p)=0),\phi\Big)\ =\
(\E_N Z_s,\phi) +\int_M  \log \left( 1 - P_N(z,p)^2\right) \frac
i{2\pi} \ddbar\phi\,.\end{equation}

Away from the diagonal, we can write $\log \left( 1 -
P_N(z,p)^2\right)=P_N(z,p)^2+\frac12P_N(z,p)^4+\cdots$, and we
have by \eqref{far},
\begin{equation} \label{ddbar1} \log \left( 1 -  P_N(z,p)^2\right) = O(N^{-
m-2})\qquad \mbox{uniformly for }\ d(z,p)\ge
b\,\sqrt{\frac {\log N}{N}}, \end{equation} where $b=\sqrt{2m+6}$.
 Furthermore  by
\eqref{ddbar1}, we have

\begin{align*}&\int_M  \log \left( 1 - P_N(z,p)^2\right) \frac i{2\pi}
\ddbar\phi =\int_{d(z,p)\leq b\sqrt{\frac{\log N}{N}}} \log \left( 1 -
P_N(z,p)^2\right) \frac i{2\pi} \ddbar\phi +O(N^{-m-2})\,.\end{align*}
Using local normal coordinates $(w_1,\dots,w_m)$  centered at $p$, we
 write $$\frac i{2\pi}\ddbar \phi = \psi(w)\,\Om_0(w)\,,\qquad \Om_0(w)=
  \left( \frac i2\right)^m dw_1\wedge d\bar w_1\wedge \cdots \wedge dw_m\wedge
d\bar w_m\,.$$  Recalling \eqref{near}, we then have
\begin{align}\int_M  \log &\left( 1 - P_N(z,p)^2\right) \frac i{2\pi}
\ddbar\phi\notag\\& =\int_{|w|\leq b\sqrt{\frac{\log N}{N}}}\log
\left[1-P_N(p+w,p)^2\right]\psi(w)\,\Om_0(w)+O(N^{-m-2})\notag\\&
=N^{-m}\int_{|u|\leq b\sqrt{\log N}}\log\left[1-P_N\left(p+\frac u{\sqrtn},p
\right)^2\right]\psi\left(\frac u{\sqrtn}\right)\Om(u)+O(N^{-m-2})\,.
\label{intM}
\end{align}

Let
\begin{equation}\label{Lambda} \Lambda_N(z,p)= -\log
P_N(z,p)\;.\end{equation} so that
\begin{equation} \label{USEFUL} \log \left( 1 -
P_N(z,p)^2\right)=Y\circ \Lambda_N(z,p)\;,
\end{equation} where
\begin{equation}\label{Y}Y(\la):=   \log (1-e^{-2\la})\quad \mbox{for }\
\la>0 .\end{equation}
 By \eqref{near}--\eqref{nearr},
\begin{equation}\label{RN0}\Lambda_N\left(p+\frac u{\sqrtn}\,,p\right)
={\half |u|^2} + \wt R_N(u) \;,\end{equation} where
\begin{equation}\label{RN}\wt
R_N(u)=-\log[1+R_N(u,0)]=O(|u|^2N^{-1/2+\ep})\quad \mbox{for }\
|u|<b\sqrt{\log N}\;.\end{equation} We note that \begin{equation}\label{Yest}
0<-Y(\la) = - \log(1-e^{-2\la}) \le \left(1+\log^+
\frac1{\la}\right)\;,\end{equation}
\begin{equation}\label{Y'}Y'(\la)= \frac
2{e^{2\la}-1}\le \frac 1{\la},\quad \mbox{for }\ \la>1\;.\end{equation}
Hence by \eqref{USEFUL}--\eqref{Y'}, \begin{equation}\label{L} \log\left[1-P_N
\left(p+\frac u{\sqrtn},p\right)^2\right] = \log \left(1- e^{-|u|^2}\right) +
O(N^{-1/2+\ep})\qquad \mbox{for }\ |u|<b\sqrt{\log N}\,.\end{equation} Since $
\psi\left(\frac u{\sqrtn}\right) =\psi(0)+O\left(\frac u{\sqrtn}\right)$, we
then have
\begin{multline*} \log\left[1-P_N\left(p+\frac u{\sqrtn},p\right)^2\right]\psi
\left(\frac u{\sqrtn}\right) = \psi(0)\log \left(1- e^{-|u|^2}\right) +
O(N^{-1/2+\ep})\\ +\frac 1\sqrtn\,O\left(|u|\,|\log (1- e^{-|u|^2})|\right)
\qquad \mbox{for }\ |u|<b\sqrt{\log N}\,.\end{multline*}
 Since $O\left((\log N)^mN^{-1/2+\ep}\right)=O(N^{-1/2+2\ep})$ and  $|u|\log
(1- e^{-|u|^2}) \in L^1(\C^m)$, we conclude that
\begin{multline*}\int_{|u|\leq b\sqrt{\log N}}\log\left[1-P_N\left(p+\frac
u{\sqrtn},p\right)^2\right]\psi\left(\frac u{\sqrtn}\right)\Om_0(u) \\= \psi(0)
\int_{|u|\leq b\sqrt{\log N}}\log\left[1-e^{-|u|^2}\right]\Om_0(u) + O(N^{-1/2+
\ep}).\end{multline*}
We note that $$\int_{|u|\geq b\sqrt{\log N}}\log\left[1-e^{-|u|^2}\right]
\Om_0(u) = \frac{2\pi^m}{(m-1)!}\int_{b\sqrt{\log N}}^{+\infty} \log(1-
e^{r^2})r^{2m-1}\,dr = O\left(N^{-b^2/2}\right)\,.$$  Since $b>1$, we then have
\begin{multline}\label{intest}\int_{|u|\leq b\sqrt{\log N}}\log\left[1-P_N
\left(p+\frac u{\sqrtn},p\right)^2\right]\psi\left(\frac u{\sqrtn}\right)
\Om_0(u) \\= \psi(0) \int_{\C^m}\log\left[1-e^{-|u|^2}\right]\Om_0(u) +
O(N^{-1/2+\ep}).\end{multline}

Combining \eqref{cond1}, \eqref{intM} and \eqref{intest}, we have
\begin{equation} \label{cond2} \Big( \E(Z_s:s(p)=0),\phi\Big)\ =\ (\E Z_s,\phi)
+ N^{-m} \psi(0) \int_{\C^m}\log\left[1-e^{-|u|^2}\right]\Om_0(u) + O(N^{-
m-1/2+\ep})\,.\end{equation}
We note that\begin{equation} \label{psi0}
\psi(0)=\frac 1{2\pi}\,\frac {i\ddbar \phi(p)}{\Om_M(p)}\end{equation} and
\begin{eqnarray}\int_{\C^m}\log\left[1-e^{-|u|^2}\right]\Om_0(u)&=&
\frac{2\pi^m}{(m-1)!}\int_0^{+\infty} \log(1-e^{-r^2})r^{2m-1}\,dr \notag\\&=&
\frac{\pi^m}{(m-1)!}\int_0^{+\infty} \log(1-e^{-t})t^{m-1}\,dt\notag \\&=&-
\frac{\pi^m}{(m-1)!}\sum_{n=1}^{+\infty}\int_0^{+\infty} \frac{e^{-nt}}
{n}t^{m-1}\,dt\notag\\
&=& -\frac{\pi^m}{(m-1)!}\sum _{n=1}^{+\infty}\frac {(m-1)!}{n^{m+1}} \ =\ -
\pi^m\, \zeta(m+1)\,.
\label{zeta}\end{eqnarray}  The the one-point case ($k=1$) of Theorem
\ref{unscaled} follows by substituting \eqref{psi0}--\eqref{zeta} into
\eqref{cond2}.

\end{proof}

\subsection{The multi-point case}

We now condition on vanishing at $k$ points $p_1,\dots,p_k$.

\begin{proof}

We   let $H^V_N\subset H^0(M, L^N)$ denote the space of  holomorphic
sections vanishing at the points $p_1,\dots,p_k$. Let $\Phi_N^{p_j}$ be the
coherent state at $p_j$ (given by \eqref{coherent}--\eqref{coherent1}) for
$j=1,\dots,k$.  By \eqref{orthog}, a section $s\in H^0(M,L^N)$ vanishes at $p_j
$ if and only if $s$ is orthogonal to $\Phi_N^{p_j}$.  Thus $H^0(M,L^N)= H^V_N
\oplus Span\{\Phi_N^{p_j}\}$.   Let $$T:H^0(M,L^N)\to L_{p_1}^N\oplus\cdots
\oplus L_{p_k}^N\,,\quad s\mapsto s(p_1)\oplus\cdots s(p_k)\,,$$ so that $\ker
T= H^V_N$.  By Lemma \ref{linear}, the conditional expectation is given by
\begin{equation} \label{conditional2} \E_N\big(Z_s\big|s(p_1)=
\cdots=s(p_k)=0\big) = \frac i{2\pi} \ddbar\log\|\Pi_N^V(z,z)\|_{h^N} +\frac N
\pi \om_h\,,\end{equation} where $\Pi_N^V$ is the conditional \szego kernel for
the projection onto $\Pi_N^V$. We let $\Pi_N^\perp(z,w)$ denote the kernel for
the orthogonal projection onto $(H^V_N)^\perp=Span\{\Phi_N^{p_j}\}$, so that
\begin{equation}\label{Pi-orthog} \Pi_N^V(z,w)=\Pi_N(z,w)-\Pi_N^\perp(z,w)\;.
\end{equation}

Recalling \eqref{coherent}--\eqref{coherent1}, we have
\begin{eqnarray*}\lla \Phi_N^{p_i}, \Phi_N^{p_j}\rra\,
\overline{e_{p_i}^{\otimes N}} \otimes e_{p_j}^{\otimes N} &=&
\frac{ \lla \sum_\al S^N_\al(z)\otimes \overline
{S^N_\al(p_i)}\,,\,\sum_\beta  S^N_\be(z)\otimes \overline
{S^N_\be(p_j)}\rra}
{\|\Pi_N(p_i,p_i)\|_{h_N}^{1/2}\|\Pi_N(p_j,p_j)\|_{h_N}^{1/2}}\\[6pt]& = &
\frac{\overline{\Pi_N(p_i,p_j)}}
{\|\Pi_N(p_i,p_i)\|_{h_N}^{1/2}\|\Pi_N(p_j,p_j)\|_{h_N}^{1/2}}
\ ,\end{eqnarray*} and therefore by \eqref{far},
\begin{equation}\label{inner} \left|\lla \Phi_N^{p_i}, \Phi_N^{p_j}\rra\right|
= P_N(p_i,p_j)= \de_i^j+O(N^{-\infty})\,.\end{equation} In particular the $
\Phi_N^j$  are linearly independent, for $N\gg 0$. Let $$\lla \Phi_N^{p_i},
\Phi_N^{p_j}\rra =\de_i^j +W_{ij}\,.$$ By \eqref{inner}, $W_{ij}=O(N^{-\infty})
$.  Let us now replace the basis $\{\Phi_N^{p_j}\}$ of $(H_N^V)^\perp$ by an
orthonormal basis
$\{\Psi_N^{j}\}$, and write $$ \Psi_N^{i}=\sum_{j=1}^k A_{ij}\,\Phi_N^{p_j}\,.
$$
Then $$\de_i^j =\lla \Psi^i_N,\Psi^j_N \rra = \sum_{\al,\be} \lla A_{i\al}
\Phi^{p_\al},
A_{j\be} \Phi^{p_\be}\rra = \sum_{\al,\be} A_{i\al}\overline A_{j\be}(\de_\al^
\be+W_{\al\be})\,,$$
or $I=A(I+W)A^*$.

We have $$\Pi_N^\perp(z,z)= \sum \Psi_N^j(z)\otimes \overline{ \Psi_N^j(z)}=
\sum_{j,\al,\be}A_{j\al} \overline A_{j\be}\Phi_N^{p_\al}\otimes \overline
{\Phi_N^{p_\be}}=
\sum_{j\be}B_{\al\be}\Phi_N^{p_\al}\otimes \overline {\Phi_N^{p_\be}}\,,$$
where \begin{equation}\label{B}B={}^t\!A\,\overline A= {}^t(A^*\,A)= {}^t(I+W)
\inv= I+O(N^{-\infty})\,.\end{equation}  The final equality in \eqref{B}
follows by noting that
$$\|W\|_{HS}=\eta<1\implies \|(I+W)\inv-I\|_{HS}=\|W-W^2+W^3+\cdots\|_{HS} \le
\eta+\eta^2+\eta^3+\cdots = \frac \eta{1-\eta},$$  where $\|W\|
_{HS}=[\mbox{Trace}(WW^*)]^{1/2}$ denotes the Hilbert-Schmidt norm.
 Therefore $$\|\Pi_N^\perp(z,z)\| = \sum_{j=1}^k\|\Phi_N^{p_j}(z)\|^2+O(N^{-
\infty})\,.$$
Repeating the argument of the 1-point case, we then obtain
\begin{align}\label{condk}
\E_N\big(Z_s\big|s(p_1)=\cdots=s(p_k)=0\big) =\ (\E_N Z_s,\phi) +  \log \left( 1 -
\sum P_N(z,p_j)^2\right)  +O(N^{-\infty}).\end{align}
It suffices to verify the  theorem in a neighborhood of an arbitrary point
$z_0\in M$.  If $z_0\not\in \{p_1,\dots,p_k\}$, then $\log \left( 1 - \sum
P_N(z,p_j)^2\right)=O(N^{-\infty})$ in a neighborhood of $z_0$, and the formula
trivially holds. Now suppose $z_0=p_1$, for example.  Then
$$\log \left( 1 - \sum P_N(z,p_j)^2\right)=\log \left( 1 -  P_N(z,p_1)^2\right)
+O(N^{-\infty})$$ near $p_1$ and the conclusion holds there by the computation
in the 1-point case.\end{proof}

\section{Proof of Theorem \ref{scaled}: The scaled conditional expectation}
\label{proof2}

In this section we shall prove  Theorem \ref{scaled} together with the
following analogous result on the scaling asymptotics of conditional expected
zero currents of dimension $\ge 1$:

\begin{theo}\label{all codim} Let $1\le k\le m-1$. Let $(L,h)\to (M,\om_h)$ and
$(H^0(M,L^N),\gamma_{h}^N)$ be as in Theorem \ref{unscaled}. Let $p\in M$, and
choose normal coordinates $z=(z_1,\dots,z_m):M_0,p\to \C^m,0$ on a neighborhood
$M_0$ of $p$.
Let $\tau_N=\sqrtn\,z:M_0\to\C^m$ be the scaled coordinate map.  Then for a
smooth test form
$\phi\in\dcal^{m-k,m-k}(\C^m)$, we have
$$\Big( K^N_k(z|p),\tau_N^*\phi\Big)=  \int_{\C^m\sm\{0\}} \phi\wedge
\left(
\frac i{2\pi}\ddbar\left[\log(1-e^{-|u|^2})
+|u|^2\right]\right)^k \ +\ O(N^{-1/2+\epsilon})\,,
$$ and thus
$$\tau_{N*}\Big( K^N_k(z|p)\Big)
\to K_{km}^\infty(u|0):=\left(
\frac i{2\pi}\ddbar\left[\log(1-e^{-|u|^2}) +|u|^2\right]\right)^k\,,$$ where
$u=(u_1,\dots,u_m)$ denotes the coordinates in $\C^m$.
\end{theo}

Just as in Theorem \ref{scaled}, 
$K_{km}^\infty(u|0)$ is the conditional expected zero current
of $k$ independent random  functions in the  Bargmann-Fock ensemble on $\C^m$.  

To prove Theorems \ref{scaled} and \ref{all codim}, we first note that by
\eqref{Zel} and Proposition \ref{cond}, we have
\begin{align}\label{cond3}
 K^N_1(z|p)
&=\frac i{2\pi}\ddbar \log \|\Pi_N(z,z)\|_{h^N} +\frac i{2\pi} \ddbar \log
\left( 1 -
            P_N(z,p)^2\right)+\frac N\pi \om_h\notag \\
&=\frac N\pi \om_h+\frac i{2\pi} \ddbar \log \left( 1 -
P_N(z,p)^2\right)+O(N \inv)\,.\end{align}
In normal coordinates $(z_1,\dots,z_m)$  about $p $, we have
\begin{equation}\label{om}\om_h=\frac i2 \sum g_{jl}dz_j\wedge d
\bar z_l\,,\quad g_{jl}(z)=\de_j^l +O(|z|).\end{equation} Changing
variables to  $u_j=\sqrtn z_j$ gives
\begin{equation}\label{omega}\frac N\pi \om_h =\frac i{2\pi}\sum g_{jl}
\left(\frac u\sqrtn\right)\, du_j\wedge d\bar u_l = \frac i{2\pi}
\ddbar|u|^2 + \sum O(|u|N^{-1/2})\, du_j\wedge d\bar
u_l\,.\end{equation}

We can now easily verify the one  dimensional case of Theorem \ref{scaled}: Let
$m=1$.  By \eqref{L}, \eqref{cond3} and \eqref{omega}, we have
$$\Big( K^N_1(z|p),\tau_N^* \phi\Big)\ =\ \frac i{2\pi}\int_\C \left[\log(1-
e^{-|u|^2}) +|u|^2\right] \ddbar\phi +O(N^{-1/2+\ep})\,$$ for a
smooth test function $\phi\in\dcal(\C)$.  By
Green's formula,
\begin{eqnarray*}\int_{|u|>\ep} \left[\log(1-e^{-|u|^2}) +|u|^2\right]\ddbar
\phi &=& \int_{|u|>\ep}\phi\,\ddbar\left[\log(1-e^{-|u|^2}) +|u|^2\right]\\&& -
\frac{i\ep^2}{1-e^{-\ep^2}}\int_{|u|=\ep}\phi\,d\theta+O(\ep\log\ep)\\ &\to &
\int_\C\phi\,\ddbar\left[\log(1-e^{-|u|^2}) +|u|^2\right]-2\pi i\,\phi(0)\,,
\end{eqnarray*} which yields Theorem \ref{scaled} for $k=m=1$.

For the  dimension $m>1$ cases, we first derive some pointwise formulas on $M
\sm\{p\}$:
 Let $\La_N(z)=\La_N(z,p) = -\log P_N(z,p)$. Recalling \eqref{USEFUL}, we have

\begin{eqnarray*}\ddbar\log \left( 1 - P_N(z,p)^2\right) &=& \ddbar (Y\circ
\La_N)\ = \ Y''(\La_N)\,\d\La_N\wedge\dbar \La_N +Y'(\La_N)\,\ddbar\La_N\\
&=& -\frac{4\,e^{-2\La_N}}{(1-e^{-2\La_N})^2}\,\d\La_N\wedge\dbar \La_N +\frac
2{e^{2\La_N}-1}\,\ddbar\La_N
\,.\end{eqnarray*}

 By \eqref{nearr} and \eqref{RN0}--\eqref{RN}, we have
$$\La_N= \half |u|^2 + O(|u|^2N^{-1/2+\ep})\,,\quad \frac {\d\La_N}{\d\bar
u_j}= \half  u_j + O(|u|N^{-1/2+\ep})
\,,\quad  \frac {\d^2\La_N}{\d u_j\d\bar u_l}= \half \de_j^l+ O(N^{-1/2+\ep})
\,.$$ Thus
$$\dbar\La_N= \half \sum \big[u_j +O(|u|N^{-1/2+\ep})\big]\,d\bar u_j\,,$$ and
$$\ddbar\La_N = \left(\half \ddbar |u|^2 + \sum c_{jl} du_j\wedge d\bar u_l
\right),\quad c_{jl}=O(N^{-1/2+\ep}).$$  Since $Y^{(j)}(\la)=O(\la^{-j})$ for
$0<\la<1$, we then have
\begin{eqnarray} \dbar\log \left( 1 - P_N(z,p)^2\right)&=& \big[Y'(\half |u|^2)
+ O(|u|^{-2}N^{-1/2+\ep})\big]\big[\half\dbar  |u|^2 +\sum O(|u|N^{-1/2+\ep})d
\bar u_j\big]\notag\\
&=& \frac 1{e^{|u|^2}-1}\,\dbar |u|^2 +\sum O(|u|^{-1}N^{-1/2+\ep})\, d\bar u_j
\,,\label{dbarlog}
\\ \ddbar\log \left( 1 - P_N(z,p)^2\right)&=&-\frac{e^{-|u|^2}}{(1-e^{-|u|
^2})^2}\,\d|u|^2\wedge \dbar |u|^2+\frac 1{e^{|u|^2}-1}\,\ddbar |u|^2\notag\\&&
+\sum O(|u|^{-2}N^{-1/2+\ep})\,du_j\wedge d\bar u_k\notag\\&=&\ddbar\log(1-
e^{-|u|^2})+\sum O(|u|^{-2}N^{-1/2+\ep})\,du_j\wedge d\bar u_l\,,
\label{ddbarlog}\end{eqnarray} for $0<|u|<b$.
Therefore by \eqref{cond3}, \eqref{ddbarlog} and \eqref{omega},
\begin{eqnarray} K^N_1(z|p) &=&\frac i{2\pi}\, \frac 1{1-e^{-|
u|^2}}\left[
-\frac{e^{-|u|^2}}{1-e^{-|u|^2}}\,\d|u|^2\wedge\dbar |u|^2 +\ddbar|u|^2\right]
\notag\\&& +\sum
O(|u|^{-2}N^{-1/2+\ep})\,du_j\wedge d\bar u_k\notag\\ &=&\frac i{2\pi}\ddbar
\left[\log(1-e^{-|u|^2})
+|u|^2\right]+\sum O(|u|^{-2}N^{-1/2+\ep})\,du_j\wedge d\bar u_l,\quad
\label{EZs}\end{eqnarray}
for $0<|u|<b$.

 We shall use the following notation: If $R\in\dcal'^r(M)$ is a current of
order 0 (i.e., its coefficients are given locally by measures), we
  write $R=R_{sing}+R_{ac }$, where $R_{sing}$ is supported on a set of
(volume) measure 0, and  the coefficients of  $R_{ac }$ are in
$L^1_{loc}$. We also let $\|R\|$ denote the total variation
measure of $R$:
$$(\|R\|,\psi):= \sup \{|(R,\eta)|: \eta\in \dcal^{2m-r}(M), |\eta|\le \psi\},
\quad \mbox{for }\ \psi\in\dcal(M)\,.$$

\begin{lem}\label{product} The conditional expected zero distributions are  given by
\begin{eqnarray*}K^N_k(z|p)& =& \left[\frac i{2\pi} \ddbar\log\|\Pi_N(z,z)\|_{h^N}
+ \frac i{2\pi} \ddbar\log  \left( 1 - P_N(z,p)^2\right)+\frac N\pi \om_h
\right]^k_{ac}\\&& \mbox{for }\ 1\le k\le m-1\,,\\[8pt]K^N_m(z|p)& =& \de_p+\left[\frac i{2\pi} \ddbar\log\|\Pi_N(z,z)\|_{h^N}
+ \frac i{2\pi} \ddbar\log  \left( 1 - P_N(z,p)^2\right)+\frac N\pi \om_h
\right]^m_{ac}\,.\end{eqnarray*}
In particular, the currents $K^N_k(z|p)$ are smooth forms on $M\sm\{p\}$ for $1\le k\le m$, and only the top-degree current $K^N_m(z|p)$ has point  mass at $p$.
\end{lem}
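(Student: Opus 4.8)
The plan is to work in the conditional Gaussian ensemble $\ga_N^p$ on the subspace $H_N^p\subset H^0(M,L^N)$ of sections vanishing at $p$, to establish an exterior-power formula away from $p$, and then to analyze the currents at $p$ separately. Set
$$\Psi_N\ :=\ \frac i{2\pi}\ddbar\log\|\Pi_N(z,z)\|_{h^N}+\frac i{2\pi}\ddbar\log\left(1-P_N(z,p)^2\right)+\frac N\pi\om_h\,;$$
by Proposition \ref{cond} this is precisely $K_1^N(z|p)=\E_{\ga_N^p}(Z_s)$. Since the Kodaira map is an embedding for $N\gg0$, we have $P_N(z,p)<1$ for $z\neq p$, so $\Psi_N$ is a smooth, closed, positive $(1,1)$-form on $M\sm\{p\}$ with an isolated singularity at $p$.

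First I would prove $K_k^N(z|p)=\Psi_N^{\,k}$ as smooth forms on $M\sm\{p\}$. Under $\ga_N^p$ the sections $s_1,\dots,s_k$ are i.i.d.; conditioning on $s_1,\dots,s_{k-1}$, their common zero locus $V$ is (almost surely, on $M\sm\{p\}$) a smooth subvariety of codimension $k-1$, and $s_k|_V$ is a Gaussian field whose two-point function is the restriction of the \szego kernel of $H_N^p$. Applying the probabilistic Poincar\'e--Lelong formula (Lemma \ref{important lemma}) on $V$, and using that pullback to the complex submanifold $V$ commutes with $\ddbar$, gives $\E_{s_k}\big(Z_{s_1,\dots,s_{k-1}}\wedge Z_{s_k}\big)=Z_{s_1,\dots,s_{k-1}}\wedge\Psi_N$; since $\Psi_N$ is deterministic, iterating this identity yields $K_k^N(z|p)=\Psi_N^{\,k}$ on $M\sm\{p\}$.

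Next I would read off the order of the singularity at $p$ from the scaling asymptotics. By \eqref{EZs} (equivalently \eqref{near}--\eqref{nearr}), in scaled normal coordinates $u=\sqrtn\,z$ one has $\Psi_N=\frac i{2\pi}\ddbar\big[\log(1-e^{-|u|^2})+|u|^2\big]+O(|u|^{-2}N^{-1/2+\ep})$, whose leading singular term $\frac i\pi\ddbar\log|u|$ is a positive $(1,1)$-form of rank $m-1$ (degenerate in the radial direction) with coefficients $O(|u|^{-2})$. Hence for $1\le k\le m-1$ the form $\Psi_N^{\,k}$ has coefficients $O(|u|^{-2k})$, which lie in $L^1_{loc}$ because $\int_0 r^{2m-2k-1}\,dr<\infty$ exactly when $k<m$; for $k=m$ the top self-intersection of the rank-$(m-1)$ part vanishes, and the leading term $\sim|u|^{2-2m}$ comes from pairing $m-1$ singular factors with one smooth factor, which is again in $L^1_{loc}$. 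Thus $\Psi_N^{\,k}$ defines an $L^1_{loc}$ form $[\Psi_N^{\,k}]_{ac}$ on all of $M$ for every $1\le k\le m$.

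Finally I would identify the behavior at $p$, which I expect to be the main obstacle, since the exterior-power argument breaks down there (the intermediate varieties are forced through $p$, where $\Psi_N$ blows up). For $k<m$, $K_k^N(z|p)$ is a positive $(k,k)$-current whose trace measure is the expectation of the mass measure of $Z_{s_1,\dots,s_k}$; almost surely this is integration over a pure $(m-k)$-dimensional variety, and points carry no mass when $m-k\ge1$, so the trace measure has no atom at $p$. Since the coefficient measures of a positive current are absolutely continuous with respect to its trace measure, $K_k^N(z|p)$ has no singular part at $p$ and equals $[\Psi_N^{\,k}]_{ac}$. For $k=m$, $K_m^N(z|p)$ is a finite positive measure equal to the integrable density $[\Psi_N^{\,m}]_{ac}$ off $p$, so its singular part is a multiple $c\,\de_p$; because conditioning leaves the derivatives $ds_1(p),\dots,ds_m(p)$ jointly nondegenerate Gaussian (for $N\gg0$), the forced common zero at $p$ is almost surely transverse of multiplicity one, whence $c=K_m^N(\{p\})=\E(\mathrm{mult}_p)=1$. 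The delicate points are thus the positivity/trace-measure domination that rules out a spurious singular current when $k<m$, and the transversality that pins the atom to exactly $\de_p$ when $k=m$.
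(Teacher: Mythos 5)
Your proposal is correct, and its overall architecture coincides with the paper's proof: both reduce the lemma to the product identity $K^N_k(z|p)=\big[K^N_1(z|p)\big]^{\wedge k}$ on $M\sm\{p\}$ (cf.\ \eqref{E^k}), and both then treat the point $p$ by noting the current has order zero and computing its atom --- for $k<m$ via exactly your positivity/trace-domination argument (the paper's inequality $\|K^N_k(z|p)\|\,\Om_M\le C\,K^N_k(z|p)\wedge\om_h^{m-k}$ plus the a.s.\ pure $(m-k)$-dimensionality of the zero sets), and for $k=m$ via a.s.\ simplicity of the forced zero at $p$, giving mass exactly $1$. The one genuinely different step is how the product formula is justified: the paper cannot invoke \cite[Prop.~2.2]{SZa} directly because every section of $H^p_N$ vanishes at $p$, so it blows up $p$, identifies $H^p_N$ with $H^0(\wt M,\wt L^N\otimes\ocal(-E))$ via Hartogs, verifies base-point-freeness of $\wt L^N\otimes\ocal(-E)$ by Kodaira vanishing (sections with arbitrary $1$-jets), and applies the cited result on $\wt M$; you instead re-derive the product identity by inductive conditioning and slicing directly on $M\sm\{p\}$. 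Your route avoids the blow-up but in effect re-proves the cited proposition inline, and it owes some technical debts that the citation discharges: a.s.\ pure dimensionality (and restriction to the regular locus) of the intermediate zero sets $V=Z_{s_1,\dots,s_{k-1}}$, non-vanishing of $s_k$ on the components of $V\sm\{p\}$, and the Fubini interchange when integrating out $s_1,\dots,s_{k-1}$. Two smaller points: your $L^1_{loc}$ verification of $\Psi_N^{\,k}$ leans on the scaled asymptotics \eqref{EZs}, which carry an $N$-dependent error and are only stated for $|u|\lesssim\sqrt{\log N}$; the paper sidesteps any a priori integrability by defining $[\,\cdot\,]_{ac}$ through the Lebesgue decomposition of the order-zero current, so that the singular part is automatically supported at $\{p\}$ and only its mass need be computed. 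Likewise your identity $c=K^N_m(\{p\})=\E(\mathrm{mult}_p)$ requires the dominated-convergence step the paper makes explicit with $\phi_n\searrow\chi_{\{p\}}$ and the uniform bound $(Z_\sss,1)=N^m c_1(L)^m$; your jet-nondegeneracy justification of a.s.\ multiplicity one is sound, and amusingly rests on the same Kodaira-vanishing fact the paper uses for base-point-freeness on the blow-up.
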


\begin{proof} Let $$T:H^0(M,L^N)^k\to (L^{\otimes N}_p)^k\,,\quad
(s_1,\dots,s_k)\mapsto(s_1(p),\dots,s_k(p))\,.$$
By Proposition \ref{special} and Definition \ref{defcondk},
$$\left(K^N_k(z|p),\phi\right)= \E_{(\ga^p_N)^k}(Z_{s_1,\dots,s_k},\phi)\,,$$ for $\phi\in
\dcal^{m-k,m-k}(M\sm \{p\})$, where $\ga^p_N$ is the conditional Gaussian on $H^p_N$.

Next, we shall apply Proposition 2.2 in \cite{SZa} to show that \begin{equation}\label{E^k} K^N_k(z|p)=
\E_{(\ga^p_N)^k}
(Z_{s_1,\dots,s_k}) = \left[\E_{\ga^p_N}Z_{s}\right]^{\wedge
k}=\big[K^N_1(z|p)\big]^{\wedge k} \quad
\mbox{on }\ M\sm \{p\}\,.\end{equation}  We cannot apply Proposition 2.2 in \cite{SZa} directly, since all sections of $H^p_N$ vanish at $p$ by definition, so $H^p_N$ is not base point free. Instead, we shall apply this result to the blowup $\wt M$ of $p$.
Let $\pi:\wt M\to M$ be the blowup map, and let $E=\pi\inv(p)$
denote the exceptional divisor. Let $\wt L\to \wt M$ denote the pullback  of $L$, and let  $\ocal(-E)$ denote the line bundle
over $\wt M$ whose local sections are holomorphic functions
vanishing on  $E$ (see \cite[pp.~136--137]{GH}).  Thus  we have  isomorphisms \begin{equation}\label{pistar}\tau_N:H_N^p\buildrel{\approx}\over\to H^0(\wt M,\wt L^N\otimes \ocal(-E))\,,\qquad \tau_N(s)=s\circ \pi\,.\end{equation} (Surjectivity follows from Hartogs' extension theorem; see, e.g., \cite[p.~7]{GH}.) 

Let $\ical_p\subset \ocal_M$ denote the maximal ideal sheaf of $\{p\}$. From the long exact cohomology sequence
$$\cdots\to H^0(M,\ocal(L^N) )\to H^0(M,\ocal(L^N)\otimes (\ocal_M/\ical^2_p))\to
H^1(M,\ocal(L^N)\otimes\ical^2_p)\to\cdots$$ and the Kodaira vanishing theorem, it follows that $H^1(M,\ocal(L^N)\otimes\ical^2_p)=0$ and thus there exist
sections of $L^N$ with arbitrary 1-jet at $p$,  for $N$ sufficiently large (see, e.g., \cite[Theorem~(5.1)]{SS}). Therefore $\wt L^N\otimes\ocal(-E)$ is base point free.

We give $H^0(\wt M,\wt L^N\otimes\ocal(-E))$ the Gaussian measure $\wt\ga_N:=\tau_{N*}\ga^p_N$. By \cite[Prop.~2.1--2.2]{SZa} applied to  the line
bundle $\wt L^N\otimes\ocal(-E)\to \wt M$ and the space
$\scal=H^0(\wt M,\wt L^N\otimes\ocal(-E))$, we have $\E_{(\wt\ga_N)^k}\left(Z_{\tilde
s_1,\dots,\tilde s_k}\right)= \left(\E_{\wt\ga_N} Z_{\tilde
s_1}\right)^{\wedge k}$ (where the $\tilde s_j$ are independent random sections in $\scal$). 
Equation \eqref{E^k} then follows by identifying $\wt M\sm E$ with $M\sm\{p\}$ and
$H^0(\wt M,\wt L^N\otimes\ocal(-E))$ with $H_N^p$. 
By equations \eqref{cond3} and \eqref{E^k}, we then have
\begin{eqnarray*}K^N_k(z|p)& =& \left[\frac i{2\pi} \ddbar\log\|\Pi_N(z,z)\|_{h^N}
+ \frac i{2\pi} \ddbar\log  \left( 1 - P_N(z,p)^2\right)+\frac N\pi \om_h
\right]^k\\&& \mbox {on }\ M\sm\{p\}\,, \quad  \mbox{for }\ 1\le k\le m\,.
\end{eqnarray*}

Since $K^N_k(z|p)$ is a current of order 0, to complete the proof of the lemma it suffices to show that 

\begin{enumerate} \item[i)] \ $\|K^N_k(z|p)\|(\{p\}) = 0$ for $k<m$,
\item[ii)] \ $K^N_m(z|p)(\{p\}) =1$.\end{enumerate}  

We first verify (ii): Let $\{\phi_n\}$ be a decreasing sequence of smooth functions on $M$ such that $0\le\phi_n\le 1$ and $\phi_n\to \chi_{\{p\}}$ as $n\to\infty$.  We consider the random variables $X^m_n:(H_p^N)^m\to \R$ given by $$X^m_n(\sss)=(Z_\sss,\phi_n)\,,\quad \sss=(s_1,\dots,s_m)\,.$$
Every $m$-tuple $\sss\in (H^p_N)^m$
has a  zero at $p$ by definition, and almost all $\sss$ have only simple zeros; therefore $X^m_n(\sss)\to Z_\sss(\{p\}) = 1$ a.s. Furthermore $1\le X^m_n(\sss)\le (Z_\sss,1) = N^mc_1(L)^m$.  Therefore by dominated convergence,
$$K^N_m(z|p)(\{p\}) =\lim_{n\to\infty} (K^N_m(z|p),\phi_n) = \lim_{n\to\infty} \int X^m_n\,d(\ga^p_N)^m= \int \lim_{n\to\infty} X^m_n\,d(\ga^p_N)^m=1\,.
$$  To verify (i), we note that $\|K^N_k(z|p)\|\,\Om_M\le C  K^N_k(z|p)\wedge \om_h^{m-k}$ (where the constant $C$ depends only on $k$ and $m$), and thus it suffices to show that \begin{enumerate}\item[i$'$)] \  $ \big(K^N_k(z|p)\wedge \om_h^{m-k}\big)
(\{p\})=0$ for $k<m$. \end{enumerate}
  For $k<m$, we let $$X^k_n(\sss)=(Z_\sss\wedge \om_h^{m-k},\phi_n)\le \pi^{m-k}N^mc_1(L)^m\,,\quad \sss=(s_1,\dots,s_k)\,,$$ where $\phi_n$ is as before.
But this time, $X^k_n(\sss)=\int_{Z_\sss}\phi_n \om_h^{m-k}\to 0$ a.s.
Equation (i$'$) now follows exactly as before. (Equation (i) is also an immediate consequence of Federer's support theorem for locally flat currents 
\cite[4.1.20]{Fe}.)

\end{proof}

We now complete the proof of Theorem \ref{all codim}: By Lemma \ref{product} and the asymptotic formula  \eqref{EZs},  we have
\begin{eqnarray*}K^N_k(z|p)& =& K^N_k(z|p)_{ac }\\&=&\left[\frac i{2\pi} \ddbar
\log\|\Pi_N(z,z)\|_{h^N}
+ \frac i{2\pi} \ddbar\log  \left( 1 - P_N(z,p)^2\right)+\frac N\pi \om_h
\right]^k_{ac }\\&=& \left(\frac i{2\pi}\right)^k
\left[-\frac{e^{-|u|^2}}{(1-e^{-|u|^2})^2}\,\d|u|^2\wedge\dbar |u|^2 +
\frac{\ddbar|u|^2}{(1-e^{-|u|^2})} \right]^{k} \\&&\quad +\ \sum O(
|u|^{-2k}N^{-1/2+\ep})du_{j_1}\wedge du_{l_1}\wedge\cdots \wedge du_{j_k}\wedge
du_{l_k}\,.\end{eqnarray*}

Therefore,
\begin{eqnarray}\left(K^N_k(z|p)\,,\,\tau_N^*\phi\right)&=&\int_{M_0\sm\{p\}}
\left[\frac i{2\pi} \ddbar\log\|\Pi_N(z,z)\|_{h^N}
+ \frac i{2\pi} \ddbar\log  \left( 1 - P_N(z,p)^2\right)+\frac N\pi \om_h
\right]^k\wedge \tau_N^*\phi\notag\\&=& \left(\frac i{2\pi}\right)^k
\int_{\C^m\sm\{0\}}\left[-\frac{e^{-|u|^2}}{(1-e^{-|u|^2})^2}\,\d|u|^2\wedge
\dbar |u|^2 +\frac{\ddbar|u|^2}{(1-e^{-|u|^2})} \right]^{k}\wedge\phi\notag
\\&&\quad +\ N^{-1/2+\ep}\|\phi\|_\infty \int_{\supp(\phi)}O(|u|^{-2k})\,(i
\ddbar|u|^2)^m\,,\label{best}\end{eqnarray} which verifies
Theorem \ref{all codim}.\qed

\medskip
To prove Theorem \ref{scaled}, we need to integrate by parts, since if $k=m$, the integral in the last line of \eqref{best} does not a priori converge.   To begin the proof, by Lemma \ref{product} we have \begin{multline} 
\big(K^N_m(z|p)\,,\,\phi\circ\tau_N\big)=\phi(0)\\+
\int_{M_0\sm\{p\}}\phi\big(\sqrt N\,z\big)\left[\frac i{2\pi} \ddbar\log\|
\Pi_N(z,z)\|_{h^N}
+ \frac i{2\pi} \ddbar\log  \left( 1 - P_N(z,p)^2\right)+\frac N\pi \om_h
\right]^m.\label{withdelta}\end{multline}
Writing \begin{equation}\label{rho}\om_h=\frac i2 \ddbar \rho\,,\qquad \rho(z)
= |z|^2+O(|z|^3)\,,\end{equation} we then have
\begin{equation}\label{withdelta1} \big(K^N_m(z|p)\,,\,\phi\circ\tau_N\big)=
\phi(0)+ \int_{\C^m\sm\{0\}}\phi \,\cdot \left(\frac i{2\pi}\ddbar f_N\right)^m
\,,\end{equation} where
$$f_N(u)=\log \left\|\Pi_N\left(\frac u \sqrtn,\frac u \sqrtn\right)\right\|
_{h^N} -m\log (N/\pi)
+\log  \left( 1 - P_N\left(\frac u\sqrtn,0\right)^2\right)+N\rho\left(\frac u
\sqrtn\right).$$  By \eqref{Zel}, \eqref{L} and \eqref{rho},
\begin{equation}\label{fN}f_N(u)=  \log \left(1- e^{-|u|
^2}\right)  +|u|^2+ O(N^{-1/2+\ep})\,.\end{equation} Again recalling
\eqref{EZs}, we have
\begin{equation}\label{ddfN}\ddbar f_N= -\frac{e^{-|u|^2}}{(1-e^{-|u|^2})^2}\,
\d|u|^2\wedge\dbar |u|^2 +\frac{\ddbar|u|^2}{(1-e^{-|u|^2})} + O(
|u|^{-2}N^{-1/2+\ep})\,.\end{equation}

We now integrate \eqref{withdelta1} by parts.
Let \begin{equation}\label{alpha}\al_N= f_N \,(\ddbar f_N)^{m-1}\,.
\end{equation} Then for $\de>0$,
\begin{equation} \int_{|u|>\de}\phi\,\ddbar\al_N =\int_{|u|>\de} \al_N\wedge
\ddbar\phi +\frac i2\int_{|u|=\de}( \phi\,d^c\al_N -\al_N\wedge d^c\phi)\,,
\end{equation} where $d^c=i(\dbar-\d)$. By \eqref{fN}--\eqref{alpha},
\begin{equation}\label{allalpha} \al_N=\al_\infty + O\left(|u|^{-2m+2}\log(|u|
+|u|\inv)N^{-1/2+\ep}\right)\,,\end{equation} where
\begin{eqnarray*}\al_\infty &=&\left[\log(1-e^{-|u|^2}) +|u|^2\right] \left\{
\ddbar \left[\log(1-e^{-|u|^2}) +|u|^2\right]\right\}^{m-1}
\\&=&
\left[\log(1-e^{-|u|^2}) +|u|^2\right]
\left[-\frac{e^{-|u|^2}}{(1-e^{-|u|^2})^2}\,\d|u|^2\wedge\dbar |u|^2 +
\frac{\ddbar|u|^2}{(1-e^{-|u|^2})} \right]^{m-1}.\end{eqnarray*}
In particular,
\begin{equation}\label{|alpha|}\al_N=O\left(|u|^{-2m+2}\log(|u|+|u|\inv)\right)
\,,\end{equation}  and therefore
$$\lim_{\de\to 0} \int_{|u|=\de}\al_N\wedge d^c\phi = 0\,.$$

Futhermore, by \eqref{dbarlog} and \eqref{ddfN},
$$d^c\al_N = \frac {d^c|u|^2 \wedge (\ddbar |u|^2)^{m-1}}{(1-e^{-|u|^2})^m} +O
\left(|u|^{-2m+1}N^{-1/2+\ep}\right).$$
Therefore,
\begin{eqnarray*}\left(\frac i{2\pi}\right)^m \frac i2
\int_{|u|=\de}
\phi\,d^c\al_N &=& -\frac {\de\cdot \de^{2m-1}}{(1-e^{-
\de^2})^m}\mbox{Average}_{|u|=\de}(\phi) +O(N^{-1/2+\ep})\sup_{|u|=\de}|\phi|
\\&\to & -\phi(0)\,\left[1+O(N^{-1/2+\ep})\right]\,.
\end{eqnarray*}

Thus, \begin{equation} \label{byparts} \left(\frac i{2\pi}\right)^m
\int_{\C^m\sm\{0\}}\phi\,\ddbar\al_N = \left(\frac i{2\pi}\right)^m\int_{\C^m
\sm\{0\}}\al_N\,\ddbar\phi
-\phi(0)\,\left[1+O(N^{-1/2+\ep})\right]\,.\end{equation}
({\it Remark:\/}  In fact, it follows from Demailly's comparison theorem for generalized Lelong numbers \cite[Theorem~7.1]{Dem},
applied to the plurisubharmonic functions $f_N(u)$ and $
\log|u|^2$ and closed positive current $T=1$, that the two measures $i^m \ddbar\alpha_N$ and $i^m \ddbar\log|u|^2$ impart the same mass to the point $0$, and therefore we have the precise identity
$$\left(\frac i{2\pi}\right)^m
\int_{\C^m\sm\{0\}}\phi\,\ddbar\al_N = \left(\frac i{2\pi}\right)^m\int_{\C^m
\sm\{0\}}\al_N\,\ddbar\phi
-\phi(0)\,.$$
However, \eqref{byparts} suffices for our purposes.)

Combining \eqref{withdelta1}, \eqref{allalpha} and \eqref{byparts},
\begin{eqnarray*} \big(K^N_m(z|p)\,,\,\phi\circ\tau_N\big) &=&
\left(\frac i{2\pi}\right)^m \int_{\C^m\sm\{0\}}\al_N\,\ddbar\phi +O(N^{-1/2+
\ep})\\ &=&\left(\frac i{2\pi}\right)^m \int_{\C^m\sm\{0\}}\al_\infty\,\ddbar
\phi +O(N^{-1/2+\ep}).
\end{eqnarray*}

Repeating the integration by parts argument using $\al_\infty$ (or by the above comparison theorem of Demailly \cite{Dem}), we conclude
that $$\left(\frac i{2\pi}\right)^m
\int_{\C^m\sm\{0\}}\phi\,\ddbar\al_\infty = \left(\frac i{2\pi}\right)^m
\int_{\C^m\sm\{0\}}\al_\infty\,\ddbar\phi
-\phi(0)\,.$$ Therefore
\begin{eqnarray*}\big(K^N_m(z|p)\,,\,\phi\circ\tau_N\big) &=&\phi(0) +
\left(\frac i{2\pi}\right)^m \int_{\C^m\sm\{0\}}\phi\ddbar\al_\infty
+O(N^{-1/2+\ep})\\&=&
\phi(0) +\left(\frac i{2\pi}\right)^m \int_{\C^m\sm\{0\}}\phi(u)\left\{
\ddbar \left[\log(1-e^{-|u|^2}) +|u|^2\right]\right\}^m\\&& \qquad
+O(N^{-1/2+\ep})\,,\end{eqnarray*}
which completes the proof of Theorem \ref{scaled}.\qed

\section{Comparison of pair correlation density and conditional
density}\label{comparison} We conclude  with further  discussion
of the comparison  between the pair correlation function  and
conditional Gaussian density of zeros.

\subsection{Comparison  in dimension one}\label{PT}

 We now explain the sense in which the pair correlation
$K^N_{1m}(p)^{-2} K_{2m}^N(z, p)$ of \cite{BSZ,BSZ2} may be viewed as a conditional
probability density.

We begin with the case of  polynomials, i.e. $M=\CP^1$.   The possible zero
sets of a random polynomial form the configuration space
 \begin{equation*}
(\CP^1)^{(N)} = Sym^{N} \CP^1 :=
\underbrace{\CP^1\times\cdots\times \CP^1}_N /S_{N}
\end{equation*}
 of $N$ points of  $\CP^1$, where  $S_N$ is the symmetric
  group on $N$ letters.
We define the  joint probability current of zeros as the
pushforward
  \begin{equation} \label{JPCDEF}\vec K_N^N(\zeta_1, \dots, \zeta_N) : =
\dcal_* \gamma_h^N \end{equation}
  of the Gaussian measure on the space $\pcal_N$ of polynomials of degree $N$
under the  `zero set'
  map $
\dcal: \pcal_N \to (\CP^1)^{(N)}$ taking $s_N $ to its zero set.
An explicit formula for it in local coordinates is

\begin{eqnarray}
    \label{eq-030209b}
    \vec K_N^N(\zeta_1, \dots, \zeta_N) & = & \frac{1}{Z_N(h)}
\frac{|\Delta(\zeta_1, \dots, \zeta_N)|^2  d_2 \zeta_1 \cdots d_2
\zeta_N}{\left(\int_{\CP^1} \prod_{j = 1}^N |(z - \zeta_j)|^2 e^{-
N \phi(z)}  d\nu(z) \right)^{N+1}},
\end{eqnarray}
where $Z_N(h)$ is a   normalizing constant.  We refer to \cite{ZZ}
for further details.

As in \cite[\S 5.4, (5.39)]{D}, the pair correlation
function is obtained from the joint probability distribution by
integrating out all but two variables. If we   fix the second
variable of $K_{2 1}^N(z, p)$ at $p$ and divide by the density
$K_{11}^N(p)$ of zeros at $p$, we obtain the same density as if we
fixed the first variable $\zeta_1 = p$  of the density of $\vec
K_N^N(\zeta_1, \dots, \zeta_N)$, integrated out the last $N - 2$
variables and divided by the density at $p$. But fixing $\zeta_1 =
p$ and  dividing by  $K_{11}^N(p) d_2 \zeta_1$ is the conditional
probability distribution of zeros defined by the random variable
$\zeta_1$.  Thus in
dimension one, $K^N_{11}(p)^{-2} K_{21}^N(z, p)$ is the
conditional density of zeros at $z$ given a zero at $p$ if we
condition using $\zeta_1 = p$ in the configuration space picture.
This use of the term `conditional expectation of zeros given a
zero at $p$' can be found, e.g. in \cite{So}.

\subsection{Comparison in higher dimensions}

The above  configuration space approach is difficult to generalize to higher
dimensions
and full systems of polynomials. In particular, it is difficult
even to describe the configuration of joint zeros of a system as a
subset of the symmetric product. Indeed, the number of  simultaneous zeros
of $m$ sections is almost surely $c_1(L)^mN^m$ so the variety  $C_N$ of
configurations of simultaneous zeros is a subvariety of the symmetric
product $M^{(c_1(L)^mN^m)}$.  Since $C_N$ is the image of the zero set map
$$\dcal:G(m,H^0(N,L^N))\to M^{(c_1(L)^mN^m)}$$from the Grassmannian of 
$m$-dimensional subspaces of $H^0(N,L^N)$, its dimension (given by the
Riemann-Roch formula) is quite small compared with the dimension
of the symmetric product:$$ \dim
C_N=\frac{c_1(L)^m}{(m-1)!}N^m+O(N^{m-1})\sim \frac 1{m!}\dim
M^{(c_1(L)^mN^m)}\,.$$  Under the zero set map, the probability
measure on systems pushes forward to  $C_N$, but to our knowledge there is no explicit
formula as (\ref{eq-030209b}).

We now provide an intuitive and informal comparison of the two scaling limits without using
our explicit formulas.  Let $B_{\delta}(p) \subset \C^m$
be the ball of radius $\delta$ around $p$,
let $\sss = (s_1, \dots, s_m)$ be an  $m$-tuple of independent random
sections in $H^0(M,L^N)$, and let Prob denote the probability measure $(\ga_h^N)^m$ on the space of $m$-tuples $\sss$. We define the events,
 $$U_{\delta}^p = \{\sss: \sss \; \mbox{has a zero in } \;
B_{\delta}(p)\}, \;\;\; U_{\epsilon}^q = \{\sss: \sss \; \mbox{has
a zero in } \; B_{\epsilon}(q)\}.$$

Now the probability interpretation of the pair correlation
function is based on the fact that, as $\delta, \epsilon \to 0$,
$$  \int_{ B_{\delta}(P)
\times B_{\epsilon}(q)} \E \big[Z_{\sss}(z) Z_{\sss}(w)\big] = \mbox{Prob}(U_{\delta}^p \cap U_{\epsilon}^q) \big[1+o(1)\big] \;,
$$ since the probability of having two or more zeros in a small ball is small compared with the probability of having one zero.

It follows that
$$\lim_{\epsilon, \delta
\to 0} \frac{1}{\vol ( B_{\delta}(p)) \times \vol (B_{\epsilon}(q))}
 \mbox{Prob}(U_{\delta}^p \cap U_{\epsilon}^q)  = K^\infty _{2mm}(p, q).
 $$  
Similarly,  $$\lim_{\delta \to 0} \mbox{Prob} ( U_{\delta}^p)
\simeq \frac{1}{\vol B_{\delta}(p)} \int_{ B_{\delta}(p)} \E
Z_{\sss}(z) = K^\infty _{1mm}(p). \;
$$
Hence, as $\epsilon, \delta \to 0$,
$$\mbox{Prob}(U_{\epsilon}^q |
U_{\delta}^p)   \simeq \frac{\left(\int_{ B_{\delta}(p) \times
B_{\epsilon}(q)} \E Z_{\sss}(z) Z_{\sss}(w)\right)}{ \left(\int_{
B_{\delta}(p)} \E Z_{\sss}(z)  \right)} =  \frac{\left(\int_{
B_{\delta}(p) \times B_{\epsilon}(q)} K^\infty _{2mm}(z,w)\right)}{
\left(\int_{ B_{\delta}(p)}K^\infty _{1mm}(z) \right)}, $$ so that
$$\lim_{\epsilon, \delta \to 0}\frac{1}{\vol  B_{\epsilon}(q)}  \mbox{Prob}
(U_{\epsilon}^q |
U_{\delta}^p)   = \frac{K^\infty _{2 mm}(p,q)}{K^\infty _{1mm}(p)}.
$$

By comparison, $$K_1^\infty (q|p) = \lim_{\epsilon \to 0}\frac{1}{\vol
B_{\epsilon}(q)} \mbox{Prob}(U_{\epsilon}^q | \; \sss (p) = 0) =
\lim_{\epsilon, \delta \to 0} \frac{1}{\vol  B_{\epsilon}(q)}
\frac{\mbox{Prob}(U_{\epsilon}^q  \cap \fcal_{\delta}^p )}{\mbox{Prob}
(\fcal_{\delta}^p)},
$$ where $$\fcal_{\delta}^p =\left \{(s_1,\dots,s_m:  \left(
\sum |s_j(p)|^2_{h^N}\right)^{1/2}<\de\right\}\,.$$

Thus, the  difference between the Gaussian conditional density and
the pair correlation density  corresponds to the difference
between the family of systems $\fcal_{\delta}^p $ and
the family of systems $U_{\epsilon}^p $.
This comparison of the pair correlation
density and the Gaussian conditional density shows that in a
probabilistic sense,  the conditions `$\sss(p)$ is small' and
`$\s$ has a zero near $p$' are mutually singular.

\subsection{Comparison of the conditional expectation and pair correlation in codimension 1}\label{compare1}

We take a different approach
to comparing $K_1^N(z | p) $ and $K_{2 1}^N(z, p)$:
The scaling asymptotics of  $K_1^N(z | p) $ and $K_{2 1}^N(z, p)$
 are both given by  universal expressions in
 the  normalized \szego
kernel or two-point function $P_N(z,w)$ (defined in  (\ref{PN})).
This is to be expected since the two-point function is the only
invariant of a Gaussian random field. Indeed,  Proposition \ref{cond} 
says that
\begin{equation} \label{RELCOND}  K_1^N(z|p)
= \E_N\big(Z_s\big) + \frac 1{2\pi} (i\ddbar)_z\; Y (- \log P_N(z,p))\,,
\end{equation} where $Y(\la)=   \log (1-e^{-2\la})$ (recall \eqref{Y}).

We now review the approach to the pair correlation current $K^N_{21}(z,p)$ given in \cite{SZa}.  The  {\it pair correlation current} of zeros
$Z_s$ is given by
$\E_N\big(Z_s \boxtimes Z_s
\big)$, and the  {\it variance current\/}
is given by
\begin{equation}\label{vc}
\Var_N\big(Z_s\big): =
\E_N\big(Z_s \boxtimes Z_s \big)
- \E_N\big(Z_s\big)\boxtimes \E_N
\big(Z_s\big)\in \dcal'^{2k,2k}(M\times M).
\end{equation}
Here we  write
$$S\boxtimes T = \pi_1^*S \wedge \pi_2^*T \in \dcal'^{p+q}(M\times
M)\;, \qquad \mbox{for }\ S\in \dcal'^p(M),\ T\in \dcal'^q(M)\;,$$
where $\pi_1,\pi_2:M\times M\to M$ are the projections to the
first and second factors, respectively.

In \cite{SZa}, the first two authors  gave a  {\it
pluri-bipotential\/} for the variance current in codimension one,
i.e.\  a function $Q_N\in L^1(M\times M)$ such that
\begin{equation}\label{varcur}{\bf Var}_N\big(Z_{s}\big)=
(i\ddbar)_z\,(i\ddbar)_w \,Q_N(z,w) \;.\end{equation} 
The bipotential   $Q_N:M\times M\to
[0,+\infty)$ is given by
\begin{equation}
\label{qN} Q_N(z,w)= \wt G(P_N(z,w))\,,\quad \wt G(t) = -\frac 1{4\pi^2}
\int_0^{t^2} \frac{\log(1-s)}{s}\,ds\;. \end{equation}

The analogue to \eqref{RELCOND} for the pair
correlation current can be written \begin{equation}\label{BIPOT1}
K_{21}^N(z,p) =\E_N\big(Z_{s} \boxtimes Z_{s} \big)
= \E_N\big(Z_{s}\big)\boxtimes \E_N \big(Z_{s}\big) + \ddbar_z
\ddbar_p  F (- \log P_N(z,p)),
\end{equation}
where $F$ is the anti-derivative of the function $\frac 1{2\pi^2}Y$:
\begin{equation}\label{Gtilde1} F(\lambda) = \wt G(e^{-\la}) = -\frac
1{2\pi^2} \int_\la^\infty \log(1-e^{-2s})\,ds\;,\qquad \la\ge
0\;\end{equation} 
That is, $\frac 1{2\pi^2}Y (- \log P_N(z,p))$
is the relative potential between the conditioned and
unconditioned distribution of zeros, while  $ F (- \log P_N(z,p))$
is the relative {\it bi-potential} for the pair correlation
current $ \E_N\big(Z_{s} \boxtimes Z_{s} \big) $.

\end{document}